\def\address#1#2{\begingroup
\noindent\parbox[t]{7.8cm}{%
\small{\scshape\ignorespaces#1}\par\vskip1ex
\noindent\small{\itshape E-mail address}%
\/: #2\par\vskip4ex}\hfill%
\endgroup}%
\author{Nathan Owen Ilten \& Hendrik Süß}
\title{Polarized Complexity-One $T$-Varieties}
\date{}
\newcommand{\st}{\;\mid\;}
\DeclareMathOperator{\tail}{tail}
\DeclareMathOperator{\spec}{Spec}
\DeclareMathOperator{\conv}{Conv}
\DeclareMathOperator{\PP}{\mathbb{P}}
\DeclareMathOperator{\WDiv}{Div}
\DeclareMathOperator{\Div}{div}
\DeclareMathOperator{\loc}{Loc}
\DeclareMathOperator{\Face}{face}
\DeclareMathOperator{\relint}{relint}
\DeclareMathOperator{\supp}{supp}
\DeclareMathOperator{\TCDiv}{T-CaDiv}
\DeclareMathOperator{\CaSF}{CaSF}
\DeclareMathOperator{\SF}{SF}
\DeclareMathOperator{\DIV}{Div}
\DeclareMathOperator{\vol}{vol}
\DeclareMathOperator{\hb}{HB}
\newcommand{\oneface}{\blacktriangleleft}
\newcommand{\CO}{\mathcal{O}}
\newcommand{\dfan}{\mathcal{S}}
\newcommand{\ZZ}{\mathbb{Z}}
\newcommand{\QQ}{\mathbb{Q}}
\newcommand{\NN}{\mathbb{N}}
\newcommand{\A}{\mathcal{A}}
\newcommand{\C}{\mathbb{C}}
\newcommand{\G}{\mathcal{G}}
\newcommand{\D}{\mathcal{D}}
\newcommand{\HP}{\mathcal{H}}
\newtheorem{lemma}{Lemma}[section]
\newtheorem{prop}[lemma]{Proposition}
\newtheorem{cor}[lemma]{Corollary}
\newtheorem{thm}[lemma]{Theorem}
\newtheorem*{thm*}{Theorem}
\theoremstyle{definition}
\newtheorem*{ex}{Example}
\newtheorem*{remark}{Remark}
\newtheorem*{defn}{Definition}
\newcommand{\fansydivnull}{%
 \psset{unit=1cm}
 \begin{pspicture}(-1.5,-.5)(3.5,.5)%
 \psset{linewidth=1pt}%
 \psline{<->}(-1,0)(3,0)
 \psset{dotstyle=|,linewidth=2pt}
 \psdots(0,0)(2,0)
\rput(0,0.3){\scriptsize$0$}	
\rput(1,0.3){\scriptsize$1$}	
\rput(2,0.3){\scriptsize$2$}	
\psset{linecolor=lightgray}
\psdot(1,0)
\end{pspicture}}
\newcommand{\fansydivother}{%
 \psset{unit=1cm}
 \begin{pspicture}(-1.5,-.5)(3.5,.5)%
 \psset{linewidth=1pt}%
 \psline{<->}(-1,0)(3,0)
 \psset{dotstyle=|,linewidth=2pt}
 \psdots(-.5,0)
 \rput(-.6,0.4){\scriptsize$-\frac{1}{2}$}	
\end{pspicture}}
\newcommand{\hnull}{%
 \psset{unit=1cm}
 \begin{pspicture}(-1.5,-4)(3.5,1)%
	 \psgrid[gridwidth=0.3pt,griddots=5,subgriddiv=1,gridlabels=5pt](-1,-3)(3,1)
 \psset{linewidth=1pt}%
 \psline{<->}(-1,0)(3,0)
 \psset{dotstyle=|,linewidth=2pt}
 \psdots(0,0)(2,0)
\psset{linecolor=lightgray}
\psdot(1,0)
\psset{linecolor=gray,linewidth=1pt}
\psline(-.5,-3)(0,-2)(1,-1)(2,-2)(2.5,-3)
\end{pspicture}}
\newcommand{\hother}{%
 \psset{unit=1cm}
 \begin{pspicture}(-1.5,-4)(3.5,1)%
	 \psgrid[gridwidth=0.3pt,griddots=5,subgriddiv=1,gridlabels=5pt](-1,-3)(3,1)
 \psset{linewidth=1pt}%
 \psline{<->}(-1,0)(3,0)
 \psset{dotstyle=|,linewidth=2pt}
 \psdots(-.5,0)
\psset{linecolor=gray,linewidth=1pt}
\psline(-1,-2)(-.5,0)(1,-3)
\end{pspicture}}
\newcommand{\hstarnull}{%
 \psset{unit=1cm}
 \begin{pspicture}(-2.5,-2.5)(2.5,2.5)%
	 \psgrid[gridwidth=0.3pt,griddots=5,subgriddiv=1,gridlabels=5pt](-2,-2)(2,2)
 \psset{linewidth=2pt}%
 \psline{[-]}(-2,0)(2,0)
\psset{linecolor=gray,linewidth=1pt}
\psline(-2,-2)(-1,0)(1,2)(2,2)
\end{pspicture}}
\newcommand{\hstarother}{%
 \psset{unit=1cm}
 \begin{pspicture}(-2.5,-2.5)(2.5,2.5)%
	 \psgrid[gridwidth=0.3pt,griddots=5,subgriddiv=1,gridlabels=5pt](-2,-2)(2,2)
 \psset{linewidth=2pt}%
 \psline{[-]}(-2,0)(2,0)
\psset{linecolor=gray,linewidth=1pt}
\psline(-2,1)(2,-1)
\end{pspicture}}
\begin{document}
\maketitle
\begin{abstract}
We describe polarized complexity-one $T$-varieties combinatorially in terms of so-called divisorial polytopes, and show how geometric properties of such a variety can be read off the corresponding divisorial polytope. We compare our description with other possible descriptions of polarized complexity-one $T$-varieties. We also describe how to explicitly find generators of affine complexity-one $T$-varieties.
\end{abstract}

\noindent Keywords: Toric varieties, $T$-varieties 
\\

\noindent MSC: Primary 14M25

\section*{Introduction}
It is well known that there is a correspondence between polarized toric varieties and  lattice polytopes. The main result of this paper is to generalize this to the setting of normal varieties with effective complexity-one torus action, i.e. complexity-one $T$-varieties. In order to do so, we introduce so-called \emph{divisorial polytopes}. In short,
 a divisorial polytope on a smooth projective curve $Y$ in a lattice $M$ is a piecewise affine concave function $$\Psi=\sum_{P\in Y} \Psi_P\cdot P:\Box\to \DIV_\QQ Y$$ from some 
polytope in $M_\QQ$ to the group of $\QQ$-divisors on $Y$, such that 
\begin{enumerate}
	\item $\deg \Psi(u) > 0$ for $u$ from the interior of $\Box$,\label{item:con1}
	\item $\deg \Psi(u) > 0$ or  $\Psi(u) \sim 0$ for $u$ a vertex of $\Box$,\label{item:con2}
\item The graph of $\Psi_P$ has integral vertices for every $P \in Y$.
\end{enumerate}
We then show that, similar to the toric case, there is a correspondence between polarized complexity-one $T$-varieties and divisorial polytopes. We also describe how smoothness, degree, and Hilbert polynomial of a polarized $T$-variety can be determined from the corresponding divisorial polytope.

There are two other logical approaches to describing a polarized complexity-one $T$-variety.
Indeed, $T$-invariant Cartier divisors on complexity-one $T$-varieties have been described in \cite{lars08} in terms of \emph{divisorial fans} and support functions, along with a characterization of ampleness. On the other hand, a sufficiently high multiple of some polarizing line bundle gives a map to projective space such that the corresponding affine cone is a complexity-one $T$-variety describable by a \emph{polyhedral divisor} $\D$. We compare these two approaches with our divisorial polytopes, showing how to pass from one description to another.

We also present two other results. Firstly, we show how the complicated combinatorial data of a divisorial fan used to describe a general $T$-variety can be simplified to a so-called \emph{marked fansy divisor} for complete complexity-one $T$-varieties. Secondly, we address the problem of finding minimal generators for the multigraded $\C$-algebra corresponding to a polyhedral divisor $\D$ on a curve. This then gives us a method to determine whether projective embeddings of complexity-one $T$-varieties are in fact projectively normal.

We begin in section \ref{sec:tvar} by recalling the construction of $T$-varieties from \cite{MR2426131}. We specialize to the complexity-one case, and introduce marked fansy divisors. In section \ref{sec:div}, we then recall the description of $T$-invariant Cartier divisors. Section \ref{sec:dpoly} is dedicated to divisorial polytopes. Here we prove the correspondence between divisorial polytopes and polarized complexity-one $T$-varieties, and discuss properties of divisorial polytopes. In section \ref{sec:cones}, we then compare support functions and divisorial polytopes with polyhedral divisors corresponding to affine cones. Finally, in section \ref{sec:gen} we describe how to find minimal generators for affine complexity-one $T$-varieties.

We should remark that while this paper only looks at complexity-one $T$-varieties, we believe the correspondence between polarized $T$-varieties and divisorial polytopes should generalize to higher-complexity torus actions. To generalize the above definition of divisorial polytopes, we replace $Y$ by any normal projective variety, and the degree conditions in \ref{item:con1} and \ref{item:con2} are replaced respectively by ampleness and semiampleness. 

\section{Polyhedral Divisors and $T$-Varieties}\label{sec:tvar}
We recall several notions from~\cite{MR2426131}, and will then specialize these to the case of complexity-one $T$-varieties. As usual, let $N$ be a lattice with dual $M$ and let $N_\QQ$ and $M_\QQ$ be the associated $\QQ$ vector spaces.
For any polyhedron $\Delta\subset N_\QQ$, let $\tail(\Delta)$ denote its tailcone, that is, the cone of unbounded directions in $\Delta$. Thus, $\Delta$ can be written as the Minkowski sum of some bounded polyhedron and its tailcone. For any polyhedron $\Delta\subset N_\QQ$ and vector $u$ in the dual of its tailcone, let $\Face(\Delta,u)$ be the set of $\Delta$ on which $u$ attains its minimum. A \emph{face} of $\Delta$ is then defined to be any subset of $\Delta$ of the form $\Face(\Delta,u)$, or the empty set.

Let $Y$ be a normal semiprojective variety over $\C$ and let $\sigma\subset N_\QQ$ be a pointed polyhedral cone. By $\sigma^\vee$ we denote the dual cone of $\sigma$.
 \begin{defn}A \emph{polyhedral divisor} on $Y$ with tail cone $\sigma$ is a 
formal finite sum
$$\D = \sum_P \Delta_P \cdot P,$$
where $P$ runs over all prime divisors on $Y$ and $\Delta_P$ is a polyhedron with tailcone $\sigma$.
Here, finite means that only finitely many coefficient differ from the 
tail cone. Note that the empty set is also allowed as a coefficient.
If $Y$ is a complete curve, we define the \emph{degree} of a polyhedral divisor by $$\deg \D := \sum_P \Delta_P $$ where summation is via Minkowski addition. If $Y$ is an affine curve, we define the degree as $\deg \D=\emptyset$.
\end{defn}
We can evaluate a polyhedral divisor for every element $u \in \sigma^\vee \cap M$ via
$$\D(u):=\sum_P \min_{v \in \Delta_P} \langle v , u \rangle P$$
 in order to obtain an ordinary divisor $\D(u)$ on the locus of $\D$, which is defined as $\loc \D := Y \setminus \left( \bigcup_{\Delta_P = \emptyset} P \right)$.

\begin{defn}
A polyhedral divisor $\D$ is called {\em proper} if $\D(u)$ is a semiample $\QQ$-Cartier divisor for all $u\in\sigma^\vee$, and if $\D(u)$ is big for all $u$ in the interior of $\sigma^\vee$. If $Y$ is a curve, note that $\D$ is proper exactly when $\deg \D \subsetneq \sigma$, and for all $u\in\sigma^\vee$ with $\min_{v\in \deg \D} \langle v,u\rangle= 0$ it follows that 
a multiple of $\D(u)$ is principal.
\end{defn}

To a proper polyhedral divisor we associate an $M$-graded $\mathbb{C}$-algebra and consequently
an affine scheme admitting a $T^N=N\otimes \C^*$-action:
$$X(\D):= \spec \bigoplus_{u \in \sigma^\vee \cap M} H^0(Y,\D(u)).$$
This construction gives a normal variety of dimension $\dim N_\QQ+\dim Y$ together with an effective $T^N$-action. 

\begin{remark}
	If $\D$ is a non-proper polyhedral divisor, we can still associate an $M$-graded $\mathbb{C}$-algebra as above, and consequently an affine scheme $X(\D)$ with $T^N$-action. However, the resulting algebra need not be finitely generated; similarly, we can't say anything about the dimension of $X(\D)$ or the effectiveness of the $T^N$-action.
\end{remark}

In order to glue together the affine varieties with $T^N$-action, we require some further definitions:
\begin{defn}
Let $\D=\sum_P \Delta_P \cdot P$, $\D'=\sum_P \Delta'_P \cdot P$ be two polyhedral divisors on $Y$ with tail cones $\sigma$ and $\sigma'$.
\begin{itemize}
\item We define their \emph{intersection} by $$\D \cap \D' := \sum_P (\Delta_P \cap \Delta'_P) \cdot P.$$
\item We say  $\D' \subset \D$ if $\Delta'_P\subset\Delta_P$ for every point $P \in Y$.
\item For $y\in Y$ a not necessarily closed point, we call $\D_y:=\sum_{P\ni y} \Delta_P$ the slice of $\D$ at $P$ and denote it by $\D_y$ as well.
\end{itemize}
\end{defn}

If $\D' \subset \D$ and both are proper then we have an inclusion
$$\bigoplus_{u \in \sigma^\vee \cap M} H^0( Y,\D'(u))
\supset \bigoplus_{u \in \sigma^\vee \cap M} H^0( Y,\D(u))$$ which corresponds to a dominant morphism $X(\D') \rightarrow X(\D)$. We say that $\D'$ is a \emph{face} of $\D$, written $\D'\prec\D$, if this morphism is an open embedding. 

\begin{defn}
	A {\em divisorial fan} is a finite set $\dfan$ of proper polyhedral divisors such that for $\D,\D' \in \Xi$ we have $\D \succ \D' \cap \D \prec \D'$ with $\D' \cap \D$ also in $\dfan$. The {\em tailfan} of $\dfan$ is the set of all $\tail(\D)$ for $\D\in\dfan$.
For  a not necessarily closed point $y\in Y$, the polyhedral complex $\dfan_y$ defined by the polyhedra $\D_y$, $\D \in \Xi$ is called a {\em slice} of $\dfan$.
$\dfan$ is called {\em complete} if all slices $\dfan_y$ are complete subdivisions of $N_\QQ$ and $Y$ is complete.
\end{defn}

We may glue the affine varieties $X(\D)$ via
$$X(\D) \leftarrow X(\D \cap \D') \rightarrow X(\D').$$
This construction yields a normal scheme $X(\dfan)$ of dimension $\dim N_\QQ+\dim Y$ with an effective torus action by $T^N$; furthermore, $X(\dfan)$ is complete if and only if $\dfan$ is complete. Note that all normal varieties with effective torus action can be constructed in this manner. 

For the remainder of the section we will restrict to the case that $Y$ is a curve; this is thus the case of complexity one $T$-varieties. As we had already seen, the criterion for properness of a polyhedral divisor simplifies nicely. This is true as well for the face relation.  Let $\D,\D'$ be a polyhedral divisors on a curve $Y$ with $\D$ proper. In this case, we say $\D'\oneface \D$ if $\Delta'_P$ is a face of $\Delta_P$ for every point $P \in Y$ and $\deg \D \cap \sigma' = \deg \D'$. We then have the following proposition:

\begin{prop}\label{prop:faces}
Let $\D,\D'$ be a polyhedral divisors on a curve $Y$ with $\D$ proper. Then $\D'\prec \D$ if and only if $\D'\oneface\D$.
\end{prop}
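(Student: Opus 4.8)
The plan is to reduce the statement to the combinatorial description of the face relation for polyhedral divisors from~\cite{MR2426131}. In the form we need, it states: for $\D$ proper, $\D'\prec\D$ holds if and only if there is a lattice vector $w\in\sigma^\vee\cap M$ with $\Delta'_P=\Face(\Delta_P,w)$ for every $P$ and such that some positive multiple of $\D(w)$ is principal. Our task then becomes purely combinatorial: to show that the existence of such a $w$ is equivalent to $\D'\oneface\D$. Two facts drive the translation. First, because $\deg\D=\sum_P\Delta_P$ and minimizing a linear form distributes over Minkowski sums, we have $\deg\D(w)=\min_{v\in\deg\D}\pair{v}{w}$ and $\Face(\deg\D,w)=\sum_P\Face(\Delta_P,w)$ for every $w\in\sigma^\vee$. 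Second, properness of $\D$ on a curve gives $\deg\D\subseteq\sigma$, so $\pair{\cdot}{w}$ is nonnegative on $\deg\D$, while $\sigma\cap w^\perp=\sigma'$ holds exactly for $w$ in the relative interior of the dual face $\tau:=\sigma^\vee\cap(\sigma')^\perp$.

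For the forward implication I would assume $\D'\prec\D$ and take $w$ as above. Each $\Delta'_P=\Face(\Delta_P,w)$ is then a face of $\Delta_P$, and $\sigma'=\tail\Delta'_P=\sigma\cap w^\perp$. Since a principal divisor on a complete curve has degree zero, $\min_{v\in\deg\D}\pair{v}{w}=\deg\D(w)=0$; combined with $\deg\D\subseteq\sigma$ this yields $\deg\D'=\sum_P\Face(\Delta_P,w)=\Face(\deg\D,w)=\deg\D\cap w^\perp=\deg\D\cap(\sigma\cap w^\perp)=\deg\D\cap\sigma'$. Hence $\D'\oneface\D$.

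The reverse implication is the heart of the matter. Assume $\D'\oneface\D$; in particular $\sigma'=\tail\Delta'_P$ is a face of $\sigma$, so I may choose a lattice vector $w\in\relint\tau$, giving $\sigma\cap w^\perp=\sigma'$. Using $\deg\D\subseteq\sigma$ one gets $\deg\D\cap\sigma'=\deg\D\cap w^\perp$, and since this set equals the nonempty polyhedron $\deg\D'$, the form $\pair{\cdot}{w}$ attains its minimum $0$ on it; thus $\Face(\deg\D,w)=\deg\D\cap\sigma'=\deg\D'$. Now I invoke uniqueness of the Minkowski decomposition of a face: if $H=\Face(Q,w)$ is a face of a Minkowski sum $Q=\sum_P Q_P$, then any expression $H=\sum_P H_P$ with $H_P$ a face of $Q_P$ forces $H_P=\Face(Q_P,w)$, because $\pair{\cdot}{w}\geq\min_{Q_P}\pair{\cdot}{w}$ on $H_P$ while the termwise minima sum to $\min_Q\pair{\cdot}{w}$, so equality must hold termwise. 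Applying this to $\deg\D'=\sum_P\Delta'_P=\sum_P\Face(\Delta_P,w)$ gives $\Delta'_P=\Face(\Delta_P,w)$ for every $P$. Finally, $\min_{v\in\deg\D}\pair{v}{w}=0$ together with the curve-properness criterion recalled above shows that a multiple of $\D(w)$ is principal, so the criterion of~\cite{MR2426131} applies and $\D'\prec\D$.

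The main obstacle is exactly this bridge in the reverse direction: extracting from the single condition $\deg\D\cap\sigma'=\deg\D'$ both a uniform weight $w$ that simultaneously cuts out all the prescribed faces $\Delta'_P$ and the global principality of $\D(w)$ needed to make the morphism an open embedding rather than merely dominant. Uniqueness of Minkowski face-decompositions is what makes the facewise data cohere into one $w$, while properness converts ``$\min_{v\in\deg\D}\pair{v}{w}=0$'' into principality. The degenerate cases lie outside this computation and I would treat them directly: when some $\Delta'_P=\emptyset$ one has $\loc\D'\subsetneq\loc\D$, and when $Y$ is affine one has $\deg\D=\emptyset$ so the degree condition is vacuous; in both situations the global principality obstruction disappears and the equivalence follows immediately from the description in~\cite{MR2426131}.
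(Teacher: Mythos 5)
Your proof rests entirely on the claim that \cite{MR2426131} provides the following characterization: $\D'\prec\D$ if and only if there is a \emph{single} lattice vector $w\in\sigma^\vee\cap M$ with $\Delta'_P=\Face(\Delta_P,w)$ for \emph{every} $P$ and a multiple of $\D(w)$ principal. That is not what \cite{MR2426131} says, and this is the genuine gap. The actual characterization (Proposition 3.4 together with Definition 5.1 of \cite{MR2426131}, and the one this paper uses) is pointwise: for every $y\in Y$ there exist a weight $w_y$ \emph{depending on $y$} and a divisor $D_y\in|\D(w_y)|$ with $y\notin\supp(D_y)$, such that $\D'_y=\Face(\D_y,w_y)$ and only the weaker compatibility $\Face(\D'_v,w_y)=\Face(\D_v,w_y)$ at points $v\notin\supp(D_y)$; nothing forces $\D'_v=\Face(\D_v,w_y)$ away from $y$, and nothing makes any $\D(w_y)$ torsion in the divisor class group. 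Upgrading this pointwise data to your single global $w$ is precisely the hard half of the proposition: it is equivalent to proving $\deg\D'=\deg\D\cap\sigma'$ from $\D'\prec\D$, which the paper establishes in a separate lemma by a genuine argument (writing $v=\sum_y v_y\in\deg\D\cap\sigma'$, choosing $u$ with $\D'_z=\Face(\D_z,u)$, and showing $\langle v_y,u\rangle$ is minimal for every $y$). So your forward implication assumes what is to be proven. Moreover your criterion is simply false once $\loc\D'\subsetneq\loc\D$, since $\Face(\Delta_P,w)$ is never empty for $w\in\sigma^\vee$ -- you acknowledge this, but declaring those cases ``immediate'' hides real work: in the paper they require the refinement lemma (covering $\loc\D'$ so that a cutting weight exists locally) and Lemma 6.8 of \cite{MR2426131} applied with an auxiliary semiample divisor $E$ supported on $Y\setminus\loc\D'$, using the degree condition of $\oneface$ to make $k\cdot\D(u)-E$ semiample.

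Two further points. First, the proposition assumes only that $\D$ is proper, while the relation $\prec$ (and all of the machinery of \cite{MR2426131} you invoke) presupposes that $\D'$ is proper as well; hence the direction $\D'\oneface\D\Rightarrow\D'\prec\D$ must begin by proving properness of $\D'$, which is a substantive first step in the paper's proof (decomposing $u'=u-u''$ with $u''\in\sigma^\vee\cap(\sigma')^\perp$ and using concavity of $u\mapsto\D'(u)$); you never address it. Second, a minor but real point: your Minkowski ``uniqueness of face decompositions'' argument, as written, only yields the inclusion $\Delta'_P\subseteq\Face(\Delta_P,w)$ (each term attains the termwise minimum); to get equality one still needs that polyhedra with the same tailcone $\sigma'$ and equal support functions on $(\sigma')^\vee$ coincide, applied to the termwise-equal support functions of the two sums. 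On the positive side, your treatment of the case where both loci are complete -- choosing $w\in\relint\bigl(\sigma^\vee\cap(\sigma')^\perp\bigr)$, cutting out all $\Delta'_P$ at once, and converting $\min_{v\in\deg\D}\pair{v}{w}=0$ into principality of a multiple of $\D(w)$ via the curve criterion for properness -- is sound and closely parallels the paper's final case; but as a whole the proposal proves only that case, modulo properness of $\D'$, and the remaining cases together with the forward implication are exactly where the paper's lemmas do their work.
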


We shall need several lemmas to prove the proposition.

\begin{lemma}[Refinement lemma]
\label{sec:lem-refine}
  Let $\D$ be a polyhedral divisors with affine locus $Y$ and let $\{U_i\}_{i \in I}$ be an affine covering of $Y$. The polyhedral divisors $\D + \emptyset \cdot (Y \setminus U_i) =: \D|_{U_i} \prec \D$ define open subsets $X(\D|_{U_i}) \hookrightarrow X(\D)$, which cover $X:=X(\D)$. 
\end{lemma}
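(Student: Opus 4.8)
The plan is to pass to the $M$-graded algebra $A=\bigoplus_{u\in\sigma^\vee\cap M}H^0(Y,\D(u))$, with $X=\spec A$, and to read off both the open embeddings and the covering from elementary localization on the \emph{affine} base $Y$. The only structural inputs I need are that $\loc\D=Y$ is affine, that $\D(0)=0$ (the minimum of the zero functional over each $\Delta_P$ is $0$), so that the degree-zero part is $A_0=H^0(Y,\CO_Y)=\C[Y]$, and that for $f\in\C[Y]$ one has $\D|_{Y_f}(u)=\D(u)|_{Y_f}$, whence the algebra attached to $\D|_{Y_f}$ is $\bigoplus_u H^0(Y_f,\D(u)|_{Y_f})$. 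Since $\D|_{U_i}\subset\D$ and the induced map $A\hookrightarrow A_{U_i}$ is injective, hence dominant, the whole content is to show this map is an open embedding (which is exactly the assertion $\D|_{U_i}\prec\D$) and that the images cover.

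First I would treat a \emph{principal} open $U=Y_f$, $f\in\C[Y]=A_0$, and prove the graded identity $\bigoplus_u H^0(Y_f,\D(u)|_{Y_f})=A[1/f]$. The inclusion $\supseteq$ is clear because $f$ is invertible on $Y_f$. For $\subseteq$, a rational function $g$ with $(\Div g+\D(u))|_{Y_f}\ge 0$ can violate $\Div g+\D(u)\ge 0$ only along $Y\setminus Y_f=\supp(\Div f)$; as this support is finite, $gf^k\in H^0(Y,\D(u))$ for $k\gg 0$, so $g\in A[1/f]$. This identifies $X(\D|_{Y_f})$ with the distinguished open $X_f\subseteq X$, via the map $A\to A[1/f]$ induced by $\D|_{Y_f}\subset\D$. \textbf{This pole-clearing step is the crux of the argument}: it is the only place affineness of the locus is used, and the uniform choice of $k$ relies on $\supp(\Div f)$ being finite — automatic here since $Y$ is a curve, and in general because a divisor has finitely many components.

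For a general member $U=U_i$ of the cover I would cover $U$ by finitely many principal opens $Y_{f_1},\dots,Y_{f_r}$ with each $f_j$ vanishing on $Y\setminus U$ (principal opens form a basis, and $U$ is quasicompact). Writing $A_U=\bigoplus_u H^0(U,\D(u)|_U)$, the sheaf axiom on the integral scheme $Y$ gives $A_U=\bigcap_j A[1/f_j]$, and inverting $f_j$ in the chain $A\subseteq A_U\subseteq A[1/f_j]$ yields $(A_U)[1/f_j]=A[1/f_j]$. Hence $\spec A_U$ is covered by the distinguished opens $D(f_j)$, each mapping isomorphically onto $X_{f_j}\subseteq X$; since all these isomorphisms are restrictions of the single map $A\to A_U$, they agree on the overlaps $D(f_jf_k)\cong X_{f_jf_k}$ and glue to an isomorphism of $\spec A_U$ onto the open subset $\bigcup_j X_{f_j}$ of $X$. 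This proves $X(\D|_{U})\hookrightarrow X$ is an open embedding, so $\D|_{U}\prec\D$.

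Finally, for the covering, I would let $\{f_{ij}\}_j$ be the principal functions chosen inside each $U_i$. The family $\{f_{ij}\}_{i,j}$ has no common zero on $Y$, since $\bigcup_i U_i=Y$, so it generates the unit ideal in $\C[Y]=A_0$ and a fortiori in $A$. Thus $\bigcup_{i,j}X_{f_{ij}}=X$, and as $X(\D|_{U_i})=\bigcup_j X_{f_{ij}}$ by the previous step, the open sets $X(\D|_{U_i})$ cover $X$.
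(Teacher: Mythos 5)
Your proof is correct, and it follows the same skeleton as the paper's: settle the case of a principal open subset $Y_f$ first, then reduce an arbitrary affine member $U_i$ of the cover to a covering by principal opens. The genuine difference is where the technical content sits. For the principal case the paper proves nothing itself: it notes that $f\in\Gamma(\CO_Y)=\Gamma(X,\CO_X)_0$ and quotes \cite[Proposition 3.1]{MR2426131} for the identity $X_f=X(\D+\emptyset\cdot\Div(f))$; your pole-clearing computation $\bigoplus_u H^0\bigl(Y_f,\D(u)|_{Y_f}\bigr)=A[1/f]$ --- multiply a section by $f^k$, $k\gg 0$, to kill the finitely many violations along $\supp\Div(f)$ --- is precisely a self-contained proof of that cited fact in the situation at hand. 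Likewise, for a general $U_i$ the paper argues in one sentence that the $X(\D|_{U'_j})$ attached to a principal refinement are open in both $X(\D|_{U_i})$ and $X$ and that the inclusions factor, leaving injectivity and gluing implicit, whereas you carry this out explicitly at the level of algebras via $A_{U_i}=\bigcap_j A[1/f_j]$ and $(A_{U_i})[1/f_j]=A[1/f_j]$. What your route buys is independence from the machinery of \cite{MR2426131} and a concrete verification of the open-embedding property; the price is length, plus one small step you should state rather than leave to the word ``hence'': the distinguished opens $D(f_j)$ cover $\spec A_{U_i}$ because the $f_j$ have no common zero on the \emph{affine} set $U_i$ and therefore generate the unit ideal in $(A_{U_i})_0=\Gamma(U_i,\CO_{U_i})$ --- the same unit-ideal argument you spell out for $X$ in your final paragraph, applied one level down.
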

\begin{proof}
  Every global section $f \in \Gamma(\CO_Y)$ gives rise to a section $f \in \Gamma(X, \CO_{X})_0 = \Gamma(\CO_Y)$. By \cite[Proposition 3.1.]{MR2426131} we have $X_f = X(\D+\emptyset \cdot \Div(f))$. Hence, for principal open subsets $U_i = Y_{f_i}$ the claim follow immediately.

Since $Y$ is affine, by refining  we can pass to a covering $\{U'_j\}_{j \in J}$ of principal open subsets and corresponding polyhedral divisors $\D|_{U'_j}$. Now the $X(\D|_{U'_j})$ define open subsets of $X(\D|_{U_i})$ and of $X$ as well and cover them by the above conclusion. Since the inclusions $X(\D|_{U'_j}) \hookrightarrow X$ factor through the $X(\D|_{U_i})$s these already define an open covering of $X$.
\end{proof}

\begin{lemma}[\cite{MR2426131}, lemma 6.8.]
\label{sec:lem-open-semiample}
  Assume that $\loc \D' = \loc \D \setminus Z$ and $\D'_P =\Face(\D_P,u)$ for some $u \in  \sigma^\vee$ and all $P \in \loc \D'$. Than $\D' \subset \D$ defines an open embedding if there is a semiample divisor $E$ with support $Z$ and $k\cdot \D(u) - E$ semiample for $k \gg 0$.
\end{lemma}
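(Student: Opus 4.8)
The plan is to exhibit the dominant morphism $\phi\colon X(\D')\to X(\D)$ as an open embedding by covering $X(\D')$ with principal localizations of $X(\D)$ at homogeneous semi-invariants whose degree is a positive multiple of $u$. The basic tool I would use is the homogeneous analogue of the localization appearing in the proof of Lemma~\ref{sec:lem-refine} (cf.\ \cite{MR2426131}): for a nonzero homogeneous $f$ of degree $w$, with associated effective divisor $D_f:=\Div(f)+\D(w)$ on $\loc\D$, one has $X(\D)_f\cong X(\D_f)$, where $\D_f$ has tail cone $\Face(\sigma,w)$, coefficient $\Face(\Delta_P,w)$ for $P\notin\supp D_f$, and coefficient $\emptyset$ for $P\in\supp D_f$. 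Since $\Face(\Delta_P,ku)=\Face(\Delta_P,u)$ for every $k>0$, localizing at a degree-$ku$ element produces exactly the $u$-faces occurring in $\D'$, so the only remaining freedom is which prime divisors get deleted.

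After passing to a multiple I may assume $E\ge 0$ with $\supp E=Z$ and that $k\D(u)-E$ is base point free on $\loc\D$. For each $f_0\in H^0(\loc\D,k\D(u)-E)$ I set $f:=f_0\cdot s_E$, where $s_E$ is the canonical section with zero divisor $E$; this is homogeneous of degree $ku\in\sigma^\vee$, and writing $D_{f_0}:=\Div(f_0)+(k\D(u)-E)\ge 0$ one gets $D_f=D_{f_0}+E$, hence $\supp D_f=Z\cup\supp D_{f_0}$. Thus $\D_f$ deletes at least $Z$ and takes $u$-faces elsewhere, i.e.\ $\D_f=\D'|_{\loc\D'\setminus\supp D_{f_0}}\subset\D'$, and $X_f:=X(\D)_f\cong X(\D_f)$ is a principal open of $X(\D)$. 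Pulling $f$ back along $\phi$ gives the restriction $f|_{\loc\D'}$, and applying the same localization statement to $\D'$ identifies $\phi^{-1}(X_f)=X(\D')_{f|_{\loc\D'}}$ with $X(\D_f)$; the induced map $\phi^{-1}(X_f)\to X_f$ is then the identity isomorphism of $X(\D_f)$, so $\phi$ is an open embedding over each $X_f$.

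It remains to see that these opens exhaust $X(\D')$. Because $k\D(u)-E$ is base point free on $\loc\D$, for every point of $\loc\D'$ there is an $f_0$ not vanishing there, so $\bigcap_{f_0}\supp D_{f_0}=\varnothing$ on $\loc\D'$; hence $\bigcap_f\supp D_f=Z$ and $\bigcup_f X(\D_f)=X(\D')$, while on the target $\bigcup_f X_f$ is the corresponding open subset of $X(\D)$. Choosing finitely many $f_0$ whose nonvanishing loci cover $\loc\D'$ (by quasicompactness), I conclude that $\phi$ is an isomorphism onto an open union of principal opens of $X(\D)$, i.e.\ an open embedding.

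The main obstacle is the bookkeeping behind the homogeneous localization identity $X(\D)_f\cong X(\D_f)$ and, above all, arranging the deleted divisors to collapse to exactly $Z$ in the limit over all $f$: this is precisely what the two semiampleness hypotheses are for, since effectivity of $E$ supplies a section cutting out $Z$, while base point freeness of $k\D(u)-E$ supplies enough complementary sections both to delete nothing beyond $Z$ and to cover $\loc\D'$. A secondary subtlety is the functorial compatibility that upgrades $\phi^{-1}(X_f)\to X_f$ from a dominant map to an isomorphism; this follows from both sides being the localization attached to the same polyhedral datum $\D_f$.
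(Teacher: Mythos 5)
Your argument is correct and is in substance the proof from the cited source \cite{MR2426131} (this paper quotes Lemma 6.8 without reproducing its proof): covering $X(\D')$ by principal opens $X(\D)_f\cong X(\D_f)$ attached to homogeneous elements $f=f_0\cdot s_E$ of degree $ku$, where $s_E$ cuts out $Z$ and $f_0$ ranges over sections of the base-point-free multiple of $k\cdot\D(u)-E$, together with the observation $\Face(\Delta_P,ku)=\Face(\Delta_P,u)$ and the identification $\phi^{-1}(X_f)=X(\D')_{f|_{\loc\D'}}\cong X(\D_f)$, is exactly the intended mechanism, and your use of honest semi-invariant localizations (rather than arbitrary restrictions $\D'|_U$) correctly sidesteps the failure of the refinement lemma for non-affine loci. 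One wording slip: you cannot arrange $E\geq 0$ by ``passing to a multiple'' (a multiple of a non-effective divisor is still non-effective, and replacing $E$ by a linearly equivalent effective divisor destroys $\supp E=Z$); effectivity of $E$ is the intended reading of the hypothesis --- note that every application of the lemma in this paper explicitly chooses $E$ effective --- and it is genuinely needed for your canonical section $s_E$ to exist.
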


\begin{lemma}\label{lemma:ourfaceistheirface}
  If $\D$ is proper and $\D'\oneface \D$, then $\D'$ is proper, too, and the corresponding morphism $i:X(\D') \rightarrow X(\D)$ is an open embedding.
\end{lemma}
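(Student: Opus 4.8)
The plan is to verify the two assertions separately: first that $\D'$ is again proper, and then that $i$ is an open embedding, reducing the latter to Lemma~\ref{sec:lem-open-semiample}.

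\emph{Properness.} Since each $\Delta'_P$ is a face of $\Delta_P$, the common tailcone $\sigma'$ of $\D'$ is a face of $\sigma$, hence pointed. For the degree condition I would use the elementary observation that, for a polyhedron with tailcone $\sigma$, one has $\deg\D=\sigma$ if and only if $0\in\deg\D$; thus $\D$ proper gives $0\notin\deg\D$, whence $0\notin\deg\D\cap\sigma'=\deg\D'$, so $\deg\D'\subsetneq\sigma'$ (the case $\loc\D'$ affine being trivial). For the principality clause I would fix $u_0\in\sigma^\vee\cap(\sigma')^\perp$ cutting out $\sigma'$ and use the identity $\D'(u)=\D(u+t u_0)-t\,\D(u_0)$, valid for $t\gg 0$, to transport the principality of suitable multiples of $\D(u+t u_0)$ and $\D(u_0)$ (guaranteed by properness of $\D$) to $\D'(u)$.

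\emph{Open embedding.} The crux is to present $\D'$ as a face cut out by a \emph{single} functional, so that Lemma~\ref{sec:lem-open-semiample} applies. Suppose first that $\loc\D=Y$ is complete and $\deg\D\cap\sigma'\neq\emptyset$; then I would pick $u_0$ generic in $\relint(\sigma^\vee\cap(\sigma')^\perp)\cap M$, so that $\Face(\sigma,u_0)=\sigma'$ and, by genericity, each $\Face(\Delta_P,u_0)$ is a minimal face with tailcone $\sigma'$. The degree hypothesis then does the essential work: since $\deg\D\cap\sigma'=\Face(\deg\D,u_0)=\sum_P\Face(\Delta_P,u_0)$ while also $\deg\D'=\sum_P\Delta'_P$, comparing these Minkowski decompositions forces $\Delta'_P\subseteq\Face(\Delta_P,u_0)$, and minimality upgrades this to $\Delta'_P=\Face(\Delta_P,u_0)$ for every $P$. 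As $Z:=\loc\D\setminus\loc\D'$ is empty in this situation, Lemma~\ref{sec:lem-open-semiample} applies directly, its semiampleness hypothesis being exactly the semiampleness of $\D(u_0)$ provided by properness.

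In the remaining cases — $\loc\D$ affine, or $\loc\D'$ properly contained in $\loc\D$ — the total degree no longer constrains the face selection, and a single global functional need not exist. Here I would instead invoke the Refinement Lemma (Lemma~\ref{sec:lem-refine}) to cover the affine locus $\loc\D'$ by principal open subsets, chosen so that each meets at most one of the finitely many points at which $\Delta'_P$ is a proper face of $\Delta_P$. On such a piece only one special point occurs, so a single functional describes the restricted face and Lemma~\ref{sec:lem-open-semiample} yields an open embedding there, with $Z$ the removed points and $E$ a suitable positive divisor supported on $Z$ whose required semiampleness follows from properness, in particular from bigness of $\D(u_0)$. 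Since these opens cover $X(\D')$ and map into $X(\D)$, they assemble to the desired open embedding. I expect the main obstacle to be precisely this single-functional reduction: showing the a priori point-dependent face selection of $\D'\oneface\D$ is realized by one $u_0$ (in the complete case this is where $\deg\D\cap\sigma'=\deg\D'$ is indispensable, via uniqueness of Minkowski summands of an extremal face), and separating the special points by the Refinement Lemma otherwise; a secondary point is verifying the semiampleness hypothesis of Lemma~\ref{sec:lem-open-semiample}, where properness of $\D$ enters through semiampleness and bigness of the evaluations $\D(u)$.
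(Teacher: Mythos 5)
Your overall route coincides with the paper's: check properness separately, reduce the face relation $\D'\oneface\D$ to a single functional, apply Lemma~\ref{sec:lem-open-semiample}, and patch with the Refinement Lemma when no global functional exists. Your treatment of the case of two complete loci is correct and in fact more detailed than the paper's, which merely asserts that $\deg\D'=\deg\D\cap\sigma'$ forces $\Delta'_P=\Face(\Delta_P,u)$ for any $u$ cutting out $\sigma'$; your argument (containment forced by comparing Minkowski decompositions of the face $\deg\D\cap\sigma'$, upgraded to equality by genericity of $u_0$) is a valid proof of that assertion, close in spirit to the paper's proof of Lemma~\ref{lemma:theirfaceisourface}. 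The properness argument via $\D'(u)=\D(u+tu_0)-t\,\D(u_0)$ is likewise a legitimate variant of the paper's concavity argument, granted the single-functional presentation.

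The genuine gap is in the mixed case, $\loc\D$ complete and $\loc\D'$ affine. There Lemma~\ref{sec:lem-open-semiample} needs an effective $E$ supported on $Z=\loc\D\setminus\loc\D'$ with $k\cdot\D(u_0)-E$ semiample for $k\gg0$, which on a complete curve requires $\deg\D(u_0)>0$. You claim this ``follows from properness, in particular from bigness of $\D(u_0)$,'' and your closing sentence repeats that properness is what feeds the semiampleness hypothesis. But properness yields bigness only for weights in the interior of $\sigma^\vee$, and your $u_0$ satisfies $\Face(\sigma,u_0)=\sigma'$, hence lies on the boundary of $\sigma^\vee$ whenever $\sigma'\neq\{0\}$; for such $u_0$ properness gives only $\deg\D(u_0)\geq 0$, possibly with equality. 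The actual source of positivity is the degree half of the hypothesis, which in this case reads $\deg\D\cap\sigma'=\deg\D'=\emptyset$: since $\deg\D\subseteq\sigma$ and $\sigma\cap u_0^\perp=\sigma'$, emptiness of $\deg\D\cap\sigma'$ is exactly equivalent to $\deg\D(u_0)>0$ — this is the paper's one-line argument at this point. Your proposal explicitly sets the degree condition aside in these cases (``the total degree no longer constrains the face selection''), so as written it never invokes it where it is indispensable; and it is indispensable, since with $\deg\D\cap\sigma'\neq\emptyset$ one can have all $\Delta'_P$ faces of $\Delta_P$ with $\loc\D'$ affine and $X(\D')\to X(\D)$ failing to be an open embedding, so no argument avoiding that condition can close this case. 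A smaller slip in the same paragraph: when $\sigma'\subsetneq\sigma$ the set of points at which $\Delta'_P$ is a proper face of $\Delta_P$ is not finite — it is all of $\loc\D'$; the correct finite set is where $(\Delta_P,\Delta'_P)\neq(\sigma,\sigma')$, and your single-functional choice does still work for it, because $\Face(\Delta_P,u_0)=\Face(\sigma,u_0)=\sigma'=\Delta'_P$ holds automatically at the trivial points.
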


\begin{proof}
First we check the properness of $\D'$. For affine locus there is nothing to prove. So we shall assume that $\D'$ and $\D$ have complete loci.   If we have $\deg \D' = \deg \D \cap \sigma'$, by the properness of $\D$ we get $\deg \D' \subsetneq \sigma'$. 
Now for every $u' \in (\sigma')^\vee$ there exists a decomposition $u'=u-u''$ such that $u \in \sigma^\vee$ and $u'' \in \sigma^\vee \cap (\sigma')^\perp$. First, note that $u \mapsto \D'(u)$ is by definition a concave map. Hence, $\D'(u-u'') \geq \D'(u) + \D'(-u'')$ holds. 
 The inclusion  $\deg \D' \subset \sigma'$ implies the equality $\D'(-u'') = -\D'(u'')$. Moreover, because of the inclusion $\D' \subset \D$ we conclude $\D'(u) \geq \D(u)$ and $\D'(u'') \geq \D(u'')$. But since $\deg \D'(u'') = 0$ it follows that $\D(u'')=\D'(u'')$. In particular, $\deg \D(u'')=0$. All together we get $\D(u') \geq \D(u)-\D(u'')$. Since $\D(u),-\D(u'')$ are semi-ample by the properness of $\D$, the same is true for $\D'(u')$.

We now check that $i$ is an open embedding. Suppose first that $\D$  has affine locus. We first assume additionally that $\Delta'_P=\Face(\Delta_P,u)$ for some $u \in \sigma^\vee$ and all $P \in \loc \D'$. Then $i$ is indeed an open embedding by the previous lemma, since on a affine variety every divisor is semi-ample.
Dropping the additional assumption, we may chose an open covering $\{U_j\}_{j \in J}$ of $Y$ and refine $\D'$ by $\D'|_{U_j}$ as in lemma~\ref{sec:lem-refine} such that $(\D'|_{U_j})_P = \Face(\D_P,u)$ for some $u$ and all $P \in Y$. Now we infer that $X(\D'|_{U_j}) \rightarrow X(\D)$ is an open embedding for every $j$ and by the refinement lemma we are done.

For $\D$ of complete locus and $\D'$ not we first also assume that $\Delta'_P=\Face(\Delta_P,u)$ for some $u \in \sigma^\vee$ and all $P \in \loc \D'$; in this case we again obtain our result by applying lemma~\ref{sec:lem-open-semiample}. We may choose any effective divisor with support $Y \setminus \loc \D'$.
The relation $\D' \prec \D$ implies that $\deg \D(u) > 0$. Hence, $\deg (k\cdot \D(u) - E) > 0$ for $k \gg 0$. For the general case we may once again refine $\D'$ as above to conclude that $i$ is an open embedding.

Finally, if $\D$ and $\D'$ both have complete loci, $\deg \D' = \deg D \cap \sigma'$ implies that for any $u$ with $\sigma'=\Face(\sigma,u)$, we have  $\Delta'_P=\Face(\Delta_P,u)$ for all $P \in Y$. Now we can again use lemma~\ref{sec:lem-open-semiample} with $Z=\emptyset$ and $E=0$ since $\D(u)$ is semiample by the properness condition.
\end{proof}

\begin{lemma}\label{lemma:theirfaceisourface}
  Let $\D,\D'$ be two proper polyhedral divisors with $\D' \prec \D$. Then $\deg \D' = \sigma' \cap \deg \D$.
\end{lemma}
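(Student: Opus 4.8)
The plan is to reduce the whole statement to the behaviour of a single covector $u_0\in\sigma^\vee$ cutting out the face $\sigma'$, and then to identify $\deg\D'$ with a face of the polyhedron $\deg\D$. First, if $\loc\D=Y$ is affine there is nothing to prove: by definition $\deg\D=\deg\D'=\emptyset$ and $\sigma'\cap\emptyset=\emptyset$, so I may assume from now on that $Y$ is a complete curve. One inclusion is then immediate. From $\D'\subset\D$ I get $\deg\D'=\sum_P\Delta'_P\subseteq\sum_P\Delta_P=\deg\D$, and properness of $\D'$ gives $\deg\D'\subsetneq\sigma'$; hence $\deg\D'\subseteq\sigma'\cap\deg\D$. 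Since both sides are polyhedra with tail cone $\sigma'$, the real content of the lemma is the reverse inclusion.

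The key structural input, and the step I expect to be the main obstacle, is that the open embedding $X(\D')\hookrightarrow X(\D)$ is governed by a single covector. Because $\D$ is proper and $\D'\prec\D$, the description of $T$-invariant affine open subsets of the affine variety $X(\D)$ in \cite{MR2426131} yields a $u_0\in\sigma^\vee$ with $\sigma'=\Face(\sigma,u_0)=\sigma\cap u_0^\perp$ and $\Delta'_P=\Face(\Delta_P,u_0)$ for every prime divisor $P$ on $Y$; this is exactly the form in which faces enter Lemma~\ref{sec:lem-open-semiample} and the proof of Lemma~\ref{lemma:ourfaceistheirface}. Producing such a global $u_0$, rather than a direction depending on $P$, is where the affineness of $X(\D)$ and the gluing theory of \cite{MR2426131} are genuinely used; everything afterwards is formal.

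Granting $u_0$, I would conclude as follows. Taking the face in a fixed direction commutes with Minkowski addition, so
\[ \deg\D'=\sum_P\Face(\Delta_P,u_0)=\Face\left(\sum_P\Delta_P,\,u_0\right)=\Face(\deg\D,u_0). \]
It then suffices to show that $u_0$ attains the value $0$ as its minimum on $\deg\D$. Since $\Delta'_P=\Face(\Delta_P,u_0)$, the divisors $\D(u_0)$ and $\D'(u_0)$ have identical coefficients and hence coincide. On the other hand $u_0\in(\sigma')^\vee$ (as $\sigma'\subseteq\sigma$), and $\deg\D'\subseteq\sigma'=\sigma\cap u_0^\perp$ forces $\langle v,u_0\rangle=0$ for all $v\in\deg\D'$, that is $\min_{v\in\deg\D'}\langle v,u_0\rangle=0$. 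Properness of $\D'$ then makes a multiple of $\D'(u_0)$ principal, so $\deg\D'(u_0)=0$ on the complete curve $Y$; as $\D(u_0)=\D'(u_0)$ this gives $\min_{v\in\deg\D}\langle v,u_0\rangle=\deg\D(u_0)=0$. Therefore $\Face(\deg\D,u_0)=\deg\D\cap u_0^\perp$, and since $\deg\D\subseteq\sigma$ this equals $\deg\D\cap\sigma\cap u_0^\perp=\deg\D\cap\sigma'$. Combining with the display yields $\deg\D'=\sigma'\cap\deg\D$, as required.
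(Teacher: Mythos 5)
There is a genuine gap, and it sits exactly where you predicted it would: the existence of a single global covector $u_0\in\sigma^\vee$ with $\sigma'=\Face(\sigma,u_0)$ and $\Delta'_P=\Face(\Delta_P,u_0)$ for \emph{every} $P$ is not something \cite{MR2426131} provides, and assuming it is essentially assuming the lemma. What Proposition 3.4 and Definition 5.1 of \cite{MR2426131} give (and what the paper quotes) is point-dependent data: for every $y\in Y$ there exist $w_y\in\sigma^\vee\cap M$ and $D_y\in|\D(w_y)|$ with $y\notin\supp(D_y)$, such that $\D'_y=\Face(\D_y,w_y)$ \emph{at the point $y$ only}, together with the compatibility $\Face(\D'_v,w_y)=\Face(\D_v,w_y)$ for $v\notin\supp(D_y)$. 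The reason is that an invariant affine open is a union of principal invariant opens $X_f$ with $f$ homogeneous of varying degrees $w$, and each $f$ controls faces only away from its zero locus; a single $w$ works globally only when $\D(w)$ itself is principal, which is not given. Moreover, your own formal argument shows that ``global $u_0$ exists'' is \emph{equivalent} to the degree equality being proved: the paper's Lemma \ref{lemma:ourfaceistheirface} (last paragraph) deduces the global $u_0$ \emph{from} the equality $\deg\D'=\deg\D\cap\sigma'$, not the other way around. So the citation is circular. The paper's proof is built precisely to avoid a global covector: writing $v=\sum_y v_y\in\deg\D\cap\sigma'$ with $v_y\in\D_y$, it fixes $z\in Y$, uses only the local $u=w_z$, and exploits the chain $0=\sum_y\langle v_y,u\rangle\geq\sum_y\min\langle\D_y,u\rangle=\deg\D(u)\geq 0$ (the last inequality from properness of $\D$) to force $\langle v_y,u\rangle=\min\langle\D_y,u\rangle$ for all $y$, hence $v_z\in\Face(\D_z,u)=\D'_z$; letting $z$ vary gives $v\in\deg\D'$. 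That positivity-plus-summation trick is the missing idea in your write-up.

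A second, smaller gap is the case analysis. You dispose only of ``$Y$ affine,'' but degrees also become empty when a coefficient is the empty set. The case actually requiring work is $\loc\D$ complete and $\loc\D'$ affine (some $\Delta'_P=\emptyset$): there $\deg\D'=\emptyset$ and the claim to prove is $\sigma'\cap\deg\D=\emptyset$. Your framework cannot cover this case even in principle, since $\Face(\Delta_P,u_0)$ is never empty, so no global $u_0$ with $\Delta'_P=\Face(\Delta_P,u_0)$ for all $P$ can exist, and your identity $\deg\D'=\Face(\deg\D,u_0)$ would equate an empty set with a nonempty one. The paper treats this case separately: each $\supp(D_y)$ is nonempty (otherwise $\loc\D'=\loc\D$), so $\deg\D(w_y)>0$; choosing $y$ general gives $\sigma'=\Face(\sigma,w_y)$, and $\deg\D(w_y)>0$ together with $\langle\sigma',w_y\rangle=0$ is exactly the statement $\sigma'\cap\deg\D=\emptyset$. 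For the record, the parts of your argument downstream of the assumed $u_0$ (faces commute with Minkowski sums, $\D(u_0)=\D'(u_0)$, $\deg\D(u_0)=0$, hence $\Face(\deg\D,u_0)=\deg\D\cap\sigma'$) are correct and pleasantly clean --- but they are the easy, formal steps, as you yourself anticipated.
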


\begin{proof}
If $\loc (\D)$ is affine, the claim is immediate. We can thus assume that $\loc \D$ is complete for the rest of the proof. Recall from proposition 3.4 and definition  5.1 of 
\cite{MR2426131} that $\D' \prec \D$ is equivalent to the following condition:
\begin{center}\begin{tabular}{p{14cm}}
	For every $y\in Y$  there exists $w_y\in\sigma^\vee\cap M$ and a $D_y$ in the linear system $|\D(w_y)|$, such that $y\notin\supp(D_y)$, $\D_y'=\Face(\D_y, w_y)$, and $\Face(\D_v',w_y)=\Face(\D_v, w_y)$ for all $v\in Y\setminus \supp(D_y)$. 
\end{tabular}
\end{center}
Now suppose first that $\loc (\D')$ is affine. Then we must show that 
\begin{equation}
	\sigma'\cap \deg \D=\emptyset.\label{eqn:affine-case}
\end{equation}
For each $w_y$ and $D_y$ as above, the support of $D_y$ cannot be empty, since otherwise $\loc(\D')=\loc(\D)$. In particular, $\deg(\D(w_y))>0$. Now, choosing $y$ to be some general point gives us $w_y$ such that $\sigma'=\Face(\sigma,w_y)$ with $(\deg \D)(w_y)>0$. But this is equivalent to \eqref{eqn:affine-case}, since $\langle \sigma', w_y\rangle=0$.

Suppose instead that $\loc (\D')$ is complete. Given an element $v \in \deg \D'$ it follows from the properness of $\D'$ that 
$v \in \sigma'$ and from $\D' \subset \D$ that $v \in \deg \D$. Hence, we get $\deg \D' \subset  \deg \D \cap \sigma'$. For the other inclusion we choose an element $v= \sum_y v_y \in \deg \D \cap \sigma'$, with $v_y \in \D_y$. Now we choose an element $u \in \sigma^\vee$ such that $\D'_z = \Face(\D_z,u)$ for some $z \in Y$. This implies that $\sigma' = \Face(\sigma,u)$.  Since $\langle v_y, u \rangle \geq \min \langle \D_y, u \rangle$ holds we get $0 = \sum_y \langle v_y, u \rangle \geq \sum_y \min \langle \D_y, u \rangle \geq 0$, where first equality follows from the fact that $v \in \sigma'$ and the last inequality  from the properness of $\D$. Hence, $\langle v_y, u \rangle = \min \langle \D_y, u \rangle$ holds for every $y \in Y$ and for $y=z$ we get $v_z \in \D'_z = \Face(\D_z,u)$. Since this is true for every $z \in Y$ we conclude that $v = \sum_z v_z \in \sum_z \D'_Z = \deg \D'$.
\end{proof}

\begin{proof}[Proof of proposition \ref{prop:faces}]
	The proposition follows from the above lemmas. Indeed, lemma \ref{lemma:ourfaceistheirface} covers one direction. The other direction follows from lemma \ref{lemma:theirfaceisourface} coupled with the fact that if $\D'\prec \D$, definition 5.1 of \cite{MR2426131} ensures that $\Delta'_P$ is a face of $\Delta_P$ for every point $P \in Y$.
\end{proof}

Different divisorial fans $\dfan,\dfan'$ can in fact yield the same $T$-variety $X(\dfan)=X(\dfan')$. The differing divisorial fans simply correspond to different open affine coverings. On the other hand,  divisorial fans with identical slices might yield differing $T$-varieties, even in the complexity-one case. However, for complete complexity-one $T$-varieties, we can save the situation via the following definition:

\begin{defn}
	A \emph{marked fansy divisor} on a curve $Y$ is a formal sum $\Xi=\sum \Xi_P\cdot P$ together with a fan $\Sigma$ and some subset $C\subset \Sigma$, such that 
	\begin{enumerate}
		\item $\Xi_P$ is a complete polyhedral subdivision of $N_\QQ$ and $\tail(\Xi_P)=\Sigma$ for all $P\in Y$;\label{item:complete}
		\item For full-dimensional $\sigma\in C$ the polyhedral divisor $\D^\sigma=\sum \Delta_P^\sigma \cdot P$ is proper, where $\Delta_P^\sigma$ is the unique element of $\Xi_P$ with $\tail(\Delta_P^\sigma)=\sigma$.\label{item:proper}
                \item For $\sigma \in C$ of full dimension and $\tau \prec \sigma$, we have $\tau \in C$ if and only if $\deg \D^\sigma \cap \tau \neq \emptyset$.\label{item:open1}
                \item If $\tau \prec \sigma$ and $\tau \in C$, then $\sigma \in C$.\label{item:open2}
	\end{enumerate}

	We say that the elements of $C$ are \emph{marked}. The support of a fansy divisor is the set of points $P \in Y$, where $\Xi_P$ differs from the tailfan $\Sigma$.
\end{defn}

Now, given any complete divisorial fan $\dfan$ on $Y$, we can associate a marked fansy divisor, by setting $\Xi=\sum \dfan_P\cdot P$ and adding marks to the tailcones of all $\D\in\dfan$ with complete locus. We call this marked fansy divisor $\Xi(\dfan)$.
\begin{prop}\label{prop:markedfansydiv}
For any marked fansy divisor $\Xi$, there exists a complete divisorial fan $\dfan$ with $\Xi=\Xi(\dfan)$. If for two divisorial fans $\dfan,\dfan'$ we have that $\Xi(\dfan)=\Xi(\dfan')$, then it follows that $X(\dfan)=X(\dfan')$.
\end{prop}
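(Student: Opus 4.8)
The plan is to prove the two assertions separately, using Proposition~\ref{prop:faces} throughout as the main engine, so that the abstract face relation $\prec$ may always be replaced by the combinatorial relation $\oneface$.

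For the existence statement, I would build $\dfan$ by prescribing its maximal charts and then closing up under intersection and the face relation. For every full-dimensional $\sigma\in C$, condition~\ref{item:proper} makes $\D^\sigma=\sum_P\Delta_P^\sigma\cdot P$ a proper polyhedral divisor of complete locus, and I put it into $\dfan$. For every full-dimensional $\sigma\in\Sigma\setminus C$ I must avoid giving $\D^\sigma$ complete locus, lest $\sigma$ become marked; so I realize its cells by affine-locus divisors. Fixing two distinct points $P_1,P_2\in Y$ lying outside $\supp\Xi$, I include the two divisors obtained from $\D^\sigma$ by replacing the coefficient at $P_1$ (resp.\ $P_2$) by $\emptyset$. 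These have affine locus $Y\setminus\{P_i\}$, hence are automatically proper, and since $Y\setminus\{P_1\}$ and $Y\setminus\{P_2\}$ cover $Y$, the cell $\Delta_P^\sigma$ occurs as a coefficient of one of them for every $P$. I then let $\dfan$ be the closure of this finite collection under intersection and under $\oneface$; properness of the new members comes from Lemma~\ref{lemma:ourfaceistheirface} for faces and requires a short separate check for intersections.

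It then remains to verify that $\dfan$ is a complete divisorial fan with $\Xi(\dfan)=\Xi$. Completeness and the identity of slices $\dfan_P=\Xi_P$ are the easy part: every full-dimensional cell of $\Xi_P$ is realized (marked ones by $\D^\sigma$, unmarked ones by the affine divisors above), and lower cells appear automatically once $\dfan$ is closed under $\oneface$. The substantive point, which I expect to be the main obstacle, is to show that the complete-locus members of $\dfan$ have tailcones exactly $C$. Here I would run the degree computation: for $\tau=\Face(\sigma,u)\prec\sigma$ with $\sigma\in C$, the $\oneface$-face of $\D^\sigma$ with tailcone $\tau$ has complete locus precisely when $\min_{v\in\deg\D^\sigma}\langle v,u\rangle=0$, that is, when $\deg\D^\sigma\cap\tau\neq\emptyset$; otherwise the requirement $\deg\D'=\deg\D^\sigma\cap\tau=\emptyset$ forces some coefficient to be $\emptyset$, so the genuine face acquires affine locus. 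By condition~\ref{item:open1} this coincides with $\tau\in C$, and condition~\ref{item:open2} guarantees that the marking is consistent across the different full-dimensional cones sharing $\tau$ and across the intersections $\D^\sigma\cap\D^{\sigma'}$. This identifies the marked tailcones with $C$ and yields $\Xi(\dfan)=\Xi$.

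For the second statement, suppose $\Xi(\dfan)=\Xi(\dfan')$, so that $\dfan_P=\dfan'_P=\Xi_P$ for all $P$ and the two families of complete-locus tailcones agree. I would pass to the common refinement $\dfan''=\{\D\cap\D'\st \D\in\dfan,\ \D'\in\dfan'\}$ and show $X(\dfan)=X(\dfan'')=X(\dfan')$. The cornerstone is that for $\D\in\dfan$ and $\D'\in\dfan'$ the coefficients $\Delta_P$ and $\Delta'_P$ are cells of the \emph{same} complete subdivision $\Xi_P$, so their intersection $\Delta_P\cap\Delta'_P$ is automatically a common face of each; hence the first half of the relation $\D\cap\D'\oneface\D$ holds for free. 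Via Proposition~\ref{prop:faces} I then need only the degree equality $\deg\D\cap\tail(\D\cap\D')=\deg(\D\cap\D')$, which I would extract from the agreement of loci and markings by splitting into the complete- and affine-locus cases, to conclude $\D\cap\D'\prec\D$ and, symmetrically, $\D\cap\D'\prec\D'$. Finally, because both slices equal the complete subdivision $\Xi_P$, the cells $\Delta_P\cap\Delta'_P$ exhaust $\Delta_P$ as $\D'$ ranges over $\dfan'$, so the charts $X(\D\cap\D')$ cover $X(\D)$; thus $X(\dfan)=\bigcup_{\D}X(\D)=\bigcup_{\D,\D'}X(\D\cap\D')=X(\dfan')$, with the identification supplied by these common open charts. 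The main difficulty here is again the bookkeeping of the locus and marking data needed to secure the degree equality and the covering simultaneously.
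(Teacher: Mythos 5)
Your overall strategy coincides with the paper's: the same generating set for existence (the marked divisors $\D^\sigma$ plus affine-locus divisors realizing the unmarked maximal cells, closed under intersection, with conditions \ref{item:proper}--\ref{item:open2} doing exactly the work you assign them), and the same common-refinement device for uniqueness. Your variant of puncturing $\D^\sigma$ at two auxiliary points $P_1,P_2$ outside $\supp\Xi$, instead of the paper's choice of keeping the coefficient at one support point and replacing the coefficients at the other support points by $\emptyset$, is fine (if anything it handles small supports more gracefully), and your extra closure under $\oneface$ is harmless though unnecessary.

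The gap is in the last step of your uniqueness argument. From ``the cells $\Delta_P\cap\Delta'_P$ exhaust $\Delta_P$ as $\D'$ ranges over $\dfan'$'' you conclude ``so the charts $X(\D\cap\D')$ cover $X(\D)$.'' Set-theoretic exhaustion of the coefficients by faces does not by itself yield a covering of the variety by the corresponding open charts: a chart attached to a proper face never contains the ``deepest'' points of $X(\D)$ (compare the toric situation, where the charts of the proper faces of a cone cover only the complement of the closed orbit). What actually saves the argument is this: exhaustion forces, for each $P\in\loc\D$, some $\D'\in\dfan'$ with $\Delta'_P\supseteq\Delta_P$ (proper faces of $\Delta_P$ cover only its relative boundary); for such a $\D'$ the intersection $\D\cap\D'$ agrees with $\D$ at $P$ and at all but finitely many points, so $\D|_{U_P}=(\D\cap\D')|_{U_P}$ for a suitable open $U_P\ni P$, and the covering of $X(\D)$ then follows from Lemma~\ref{sec:lem-refine} applied to the cover $\{U_P\}$ of $\loc\D$. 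This is precisely the tool the paper invokes (``the claim follows by the refinement lemma''), and your part two never uses it. Relatedly, your degree equality for mixed pairs ($\D$ of complete locus in $\dfan$, $\D'$ of affine locus in $\dfan'$) is not a direct computation: it rests on the observation, explicit in the paper, that the complete-locus divisors of $\dfan$ and of $\dfan'$ are \emph{literally the same} divisors $\{\D^\sigma\st\sigma\in C\}$ (by uniqueness of the cell of $\Xi_P$ with a prescribed full-dimensional tailcone), so every cross-fan intersection involving a complete-locus divisor is already an intra-fan intersection governed by the divisorial-fan axioms. Make these two points explicit and your proof matches the paper's.
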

\begin{proof}
  Assume that $\Xi$ is supported at $P_1,\ldots,P_r$. We construct a divisorial fan as follows. Consider the set 
\[       S = \{\D^\sigma \st \sigma \in C\} \cup  \{\Delta \cdot P_i + \textstyle  \sum_{j\neq i} \emptyset \cdot P_j \st \Delta \in \Xi_{P_i}^{(n)},\; \tail(\Delta) \notin C\} 
\]

Now we get the divisorial fan $\mathcal{S}$ generated by $S$ by adding all intersection of the polyhedral divisors in $S$.
This is indeed a divisorial fan since \ref{item:proper} ensures that the polyhedral divisors with maximal tailcone are proper and \ref{item:open1} \& \ref{item:open2} ensures that the intersection of two polyhedral divisors is face of both of them. Obviously we have $\Xi(\dfan)=\Xi$. 

Now let $\dfan'$ be another divisorial fan with $\Xi(\dfan')=\Xi$.  We get a common refinement $\dfan''$ of $\dfan$ and $\dfan'$ by considering all mutual intersections of divisors in  $\dfan$ and $\dfan'$. To get the corrects marks the polyhedral divisors with complete locus in $\dfan'$  must be exactly the $\{\D^\sigma \mid \sigma \in C\}$. Hence, only polyhedral divisors with affine locus get refined. Now the claim follows by the refinement lemma.
\end{proof}

By the above proposition, we can thus define $X(\Xi)$ to be $X(\dfan)$ for any $\dfan$ with $\Xi=\Xi(\dfan)$. Furthermore, every complete complexity-one $T$-variety can be described via a marked fansy divisor. We thus can avoid divisorial fans and work instead with the somewhat more handy notion of marked fansy divisors.

\begin{figure}[htbp]
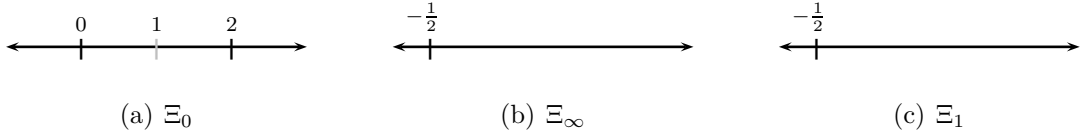

    \centering
    \subfigure[$\Xi_0$]{\fansydivnull}
    \subfigure[$\Xi_\infty$]{\fansydivother}
    \subfigure[$\Xi_1$]{\fansydivother}
    \caption{The fansy divisor for a log Del-Pezzo surface}\label{fig:fansydiv}
\end{figure}

\begin{ex}
	The subdivisions demarcated by the black lines in figure \ref{fig:fansydiv} together with marks for both $\QQ_{\geq 0}$ and $\QQ_{\leq 0}$ give a marked fansy divisor $\Xi$ on $\mathbb{P}^1$ with $X(\Xi)$ equal to the unique log Del-Pezzo surface of degree 2 with two $A_3$ and one $A_1$ singularities, see \cite{suess08}. By further subdividing at the gray line, we get a marked fansy divisor $\Xi'$. The corresponding $T$-variety comes together with a natural map $\varphi:X(\Xi')\to X(\Xi)$ which is a resolution of the $A_1$ singularity.
\end{ex}

\section{Invariant Cartier Divisors}\label{sec:div}
Invariant Cartier divisors on complexity-one $T$-varieties have been described in \cite{lars08} in combinatorial terms. We recall this description here, specializing to complete $T$-varieties and replacing divisorial fans with marked fansy divisors. For any piecewise affine continuous function $f:N_\QQ\to \QQ$ set $f^0(v)=\lim_{k\to \infty}  f(k\cdot v)/k$ for any $v\in N_\QQ$. We call $f^0$ the linear part of $f$. 
Consider now some complete marked fansy divisor $\Xi$ on a smooth projective curve $Y$ with tailfan $\Sigma$. 

\begin{defn}
By $\SF(\Xi)$ we denote  the set of all formal sums of the form
$$h=\sum_{P\in Y} h_P\otimes P$$ 
where $h_P:N_\QQ\to \QQ$ are continuous functions such that:
\begin{enumerate}
	\item $h_P$ is piecewise affine with respect to the subdivision $\Xi_P$;
	\item $h_P$ is integral, that is, if $k\cdot v$ is a lattice point for $k\in \NN$, $v\in N$, then $k\cdot h_P(v)\in\ZZ$;
	\item $h_P^0$ doesn't depend on $P$; we call this the linear part of $h$ and denote it by $h^0$. 
	\item $h_P\neq h^0$ for only finitely many $P$.
\end{enumerate}
We call an element of $\SF(\Xi)$ a support function.
\end{defn}

Consider $\sigma$ a full-dimensional cone in $\Sigma$. We define 
$$h_{|\sigma}(0)=\sum_P a_P \cdot P$$
where the $a_P$ are determined by writing ${h_P}_{|\Delta_P^\sigma}(v)=\langle v,u\rangle +a_P$. We then define $\CaSF(\Xi)$ to consist of all $h\in \SF(\Xi)$ such that for every marked $\sigma\in\Sigma$,  $h_{|\sigma}(0)$ is a principal divisor on $Y$.
Both $\SF(\Xi)$ and $\CaSF(\Xi)$ have a natural group structure. There is a group isomorphism from $\CaSF(\Xi)$ to the group $\TCDiv(X(\Xi))$ of $T$-invariant Cartier divisors on $X(\Xi)$; we denote the divisor associated to $h$ by $D_h$. We call a support function $h$ \emph{ample} if $D_h$ is ample.

\begin{prop}[\cite{lars08} 3.28]\label{prop:ample}
Consider $h\in \CaSF(\Xi)$. Then $h$ is ample if and only if $h$ is strictly concave, and if for all unmarked $\sigma\in\Sigma$ with $\sigma$ full-dimensional, $- \deg h_{|\sigma}(0) > 0$.
\end{prop}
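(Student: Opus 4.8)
The plan is to reduce ampleness to the Nakai--Moishezon/Kleiman criterion and then to compute the resulting intersection numbers combinatorially, so that the two stated conditions emerge as positivity along the two types of invariant curves. First I would argue that, exactly as for complete toric varieties, it suffices to test $D_h$ against the finitely many types of $T$-invariant complete curves in $X:=X(\Xi)$: a divisor on such a variety is ample precisely when it meets every invariant curve positively. Concretely, the cone of curves $\overline{NE}(X)$ is generated by classes of invariant curves, so that the nef cone is cut out by finitely many inequalities and ampleness is their strict version (Kleiman). This can be reduced to the toric situation by passing to an equivariant resolution, or by using an explicit description of the invariant $1$-cycles on a complexity-one $T$-variety. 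Granting this, the problem becomes the classification of the invariant curves and the computation of the intersection numbers $D_h\cdot C$.

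Next I would classify the invariant complete curves into \emph{vertical} and \emph{horizontal} ones. A vertical curve lies in a single fibre over a point $P\in Y$ and is a complete toric curve of the toric slice $\Xi_P$; it corresponds to a wall, i.e.\ a codimension-one cone of $\Xi_P$ separating two maximal cells. A horizontal curve dominates $Y$ and corresponds to a full-dimensional cone $\sigma\in\Sigma$. For the vertical curves the number $D_h\cdot C$ is computed exactly as in toric geometry from the values of $h_P$ near the wall, and positivity of all these numbers, over all $P$ and all walls, is precisely the assertion that each $h_P$ is concave with a strict kink across every wall --- equivalently, since the $h_P$ share the linear part $h^0$ and differ from it only at finitely many $P$, that $h$ is strictly concave. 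For a horizontal curve over an unmarked full-dimensional $\sigma$, the curve maps finitely onto $Y$ and $D_h$ restricts on it to a divisor of degree equal to $-\deg h_{|\sigma}(0)$ up to a positive factor, which gives the second condition. The marked cones are exactly the tailcones of the proper polyhedral divisors $\D^\sigma$ with complete locus; over such a $\sigma$ there is no separate horizontal test curve, the relevant positivity being already encoded in the properness of $\D^\sigma$, and this is why the degree inequality is imposed only on the unmarked full-dimensional cones.

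The main obstacle I anticipate is twofold. The delicate global input is the reduction to invariant curves: one must know that the Mori cone of a complete complexity-one $T$-variety is spanned by invariant curve classes, so that checking these finitely many inequalities genuinely forces ampleness and not merely nefness; this is the step that uses completeness and the complexity-one hypothesis in an essential way. The delicate local computation is the horizontal intersection number, where one has to track carefully both the sign (hence the minus sign in $-\deg h_{|\sigma}(0)$) and the contribution of the base curve $Y$, and to verify that marking $\sigma$ corresponds exactly to the disappearance of the horizontal test curve. Once these are in place, combining the vertical contributions (strict concavity) with the horizontal contributions (the degree inequalities on unmarked full-dimensional cones) yields the stated equivalence, with necessity following immediately since an ample divisor meets every curve positively.
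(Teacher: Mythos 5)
First, a point of comparison: the paper does not prove this proposition at all --- it is quoted from \cite{lars08}, Proposition 3.28 --- so your proposal can only be judged on its own merits. Your combinatorial layer is sound: the dichotomy of invariant complete curves into vertical ones (walls of the slices $\Xi_P$, with $D_h\cdot C$ given by the kink of $h_P$ across the wall) and horizontal ones (families of fixed points over the \emph{unmarked} full-dimensional cones $\sigma$, with degree $-\deg h_{|\sigma}(0)$, the marked cones contributing an isolated fixed point instead of a curve) is correct, and the necessity direction (ample $\Rightarrow$ both conditions) follows from it, modulo the local-to-global fact that strict concavity across every wall of a complete subdivision implies strict concavity. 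The genuine gap is in the sufficiency direction, where you invoke Kleiman's criterion: Kleiman's criterion presupposes that $X$ is projective, and a complete complexity-one $T$-variety need \emph{not} be projective --- for instance, a complete non-projective toric threefold with the action restricted to a two-dimensional subtorus is a complete complexity-one $T$-variety, perfectly describable by a marked fansy divisor. Projectivity of $X$ is precisely part of what the proposition delivers, so reducing to Kleiman is circular. Nor can you substitute Nakai--Moishezon for complete schemes, since that criterion tests all subvarieties, not just curves; and even on projective varieties, positivity against all curves alone does not imply ampleness (Mumford's example) --- it is rescued here only by the finite polyhedrality of the invariant-curve cone \emph{together with} projectivity, which is exactly what is missing.

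The standard way to close this gap, and essentially the route of \cite{lars08}, avoids Kleiman altogether: from concavity of $h$ and the non-strict degree inequalities one first deduces that $D_h$ is semiample (\cite{lars08} 3.27), so some multiple $k D_h$ is globally generated and defines a morphism $f:X\to X_0$ onto a projective variety with $k D_h = f^*\mathcal{O}_{X_0}(1)$. Your intersection computations then enter exactly once, to show that the \emph{strict} hypotheses force $f$ to contract no curve: a vertical curve would be contracted only if $h_P$ had no kink across the corresponding wall, and a horizontal curve only if $\deg h_{|\sigma}(0)=0$ for the corresponding unmarked $\sigma$. A proper morphism contracting no curve has finite fibers, hence is finite, and the pullback of an ample divisor under a finite morphism is ample, so $D_h$ is ample (and $X$ is projective as a consequence, not as an assumption). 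With this replacement of the global step, your vertical/horizontal analysis does the real work and the argument is complete; as written, however, the Mori-cone reduction is a step that would fail.
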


For a support function $h\in\CaSF(\Xi)$, we define its weight polytope $\Box_h\subset M_\QQ$ by
$$
\Box_h=\{u\in M_\QQ\ |\ h^0(v)\geq \langle v,u\rangle\ \forall\ v\in N_\QQ\}.
$$
We then define the dual of $h$ to be the piecewise affine concave function $h^*:\Box_h\to\DIV_\QQ Y$ given by
\begin{equation*}
	h^*=\sum_{P\in Y} h_P^*\cdot P\qquad\qquad h_P^*(u)=\min_{\substack{v\in \Xi_P\\ v\ \textrm{vertex}}} \langle v,u\rangle -h_P(v).
\end{equation*}

\begin{prop}[\cite{lars08} 3.23]\label{prop:gs}
For $h\in \CaSF(\Xi)$ and $X=X(\Xi)$, we have
\begin{equation*}
	H^0(X,D_h)_u=\begin{cases} 
		H^0(Y,h^*(u))& u\in\Box_h\cap M\\
	0 & u\notin\Box_h\cap M.\\\end{cases}
\end{equation*}
\end{prop}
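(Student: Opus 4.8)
The plan is to compute the weight-$u$ graded piece directly and identify it with a space of sections of a $\QQ$-divisor on $Y$. Since $X=X(\Xi)$ carries an effective complexity-one $T^N$-action, its field of invariants is $K(Y)$ and every $T$-semi-invariant rational function of weight $u$ has the form $g\cdot\chi^u$ with $g\in K(Y)$. As $D_h$ is $T$-invariant, $H^0(X,D_h)$ is $M$-graded, and
\[
H^0(X,D_h)_u = \{g\in K(Y)\st \Div_X(g\chi^u) + D_h\geq 0\}\cdot\chi^u,
\]
so the whole problem reduces to translating the single effectivity condition $\Div_X(g\chi^u)+D_h\geq 0$ on $X$ into conditions on $g$ and $u$ that can be read off from $\Xi$ and $h$.

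Since both $D_h$ and $\Div_X(g\chi^u)$ are $T$-invariant, effectivity need only be tested on the $T$-invariant prime divisors of $X$, of which there are two kinds: the \emph{vertical} divisors $D_{P,v}$, one for each $P\in Y$ and each vertex $v$ of the slice $\Xi_P$, and the \emph{horizontal} divisors $D_\rho$, one for each ray $\rho$ of the tailfan $\Sigma$. The key input is the combinatorial formula coming from the correspondence $\CaSF(\Xi)\cong\TCDiv(X(\Xi))$ recalled above for the coefficients along each such divisor: along $D_{P,v}$ the order of $g\chi^u$ is $\mu(v)\big(\langle v,u\rangle+\operatorname{ord}_P g\big)$ and that of $D_h$ is $-\mu(v)\,h_P(v)$, where $\mu(v)$ is the smallest positive integer with $\mu(v)v\in N$; along $D_\rho$ only the linear parts enter, contributing $h^0$ and the pairing of $u$ with the ray generator.

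I would then extract the two families of conditions separately. At a fixed $P$ the vertical conditions read $\operatorname{ord}_P g\geq h_P(v)-\langle v,u\rangle$ for every vertex $v$ of $\Xi_P$; taking the worst vertex turns this into $\operatorname{ord}_P g\geq -h_P^*(u)$, and summing over $P$ gives exactly $\Div(g)+h^*(u)\geq 0$, i.e. $g\in H^0(Y,h^*(u))$. The horizontal conditions involve only $u$ and $h^0$, not $g$; collecting them over all rays of the complete tailfan $\Sigma$ produces precisely the defining inequalities of the weight polytope, so they are satisfiable exactly when $u\in\Box_h$. Combining the two cases finishes the argument: for $u\in\Box_h$ the horizontal conditions are automatic and the surviving constraint is $g\in H^0(Y,h^*(u))$, giving $H^0(X,D_h)_u=H^0(Y,h^*(u))$; for $u\notin\Box_h$ some horizontal inequality fails for every $g$, forcing $H^0(X,D_h)_u=0$.

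The main obstacle I anticipate is the order-of-vanishing bookkeeping together with the two places where $\Xi$ is only partially complete. Concretely, the vertical analysis uses the minimum over the finitely many vertices of $\Xi_P$, and one must verify that this matches $h_P^*(u)$ rather than an infimum over the whole slice; and the horizontal analysis must be carried out over all cones of $\Sigma$, with the marked/unmarked distinction (complete versus affine locus) governing which charts actually occur in an affine cover of $X$. A clean way to control both is to localize: on each chart $X(\D^\sigma)$ the divisor $D_h$ becomes principal, equal to the divisor of an explicit semi-invariant function assembled from the linear slope of $h$ on $\sigma$ and the principal divisor $h_{|\sigma}(0)$, so that $H^0(X(\D^\sigma),D_h)_u$ is computed by the defining identity $H^0(X(\D),\CO)_w=H^0(Y,\D(w))$ for polyhedral divisors. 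Intersecting these local descriptions over the cover then recombines the per-cone data into the global divisor $h^*(u)$ and the polytope condition $u\in\Box_h$. I would run the prime-divisor computation as the main line of argument and use this chart-localization to justify the order formulas and to handle the non-complete loci.
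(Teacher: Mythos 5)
The paper offers no proof of this proposition at all---it is cited from \cite{lars08} (Proposition 3.23)---so your argument has to stand on its own. Its overall shape (weight decomposition, effectivity tested on $T$-invariant prime divisors, vertical conditions giving $g\in H^0(Y,h^*(u))$) is the standard one, and the vertical analysis is correct. But there is a genuine error in the horizontal part: it is \emph{not} true that there is a horizontal prime divisor $D_\rho$ for every ray $\rho$ of the tailfan $\Sigma$. Only the unmarked rays, i.e.\ those with $\rho\cap\deg\D^\sigma=\emptyset$ for the marked full-dimensional cones $\sigma$ containing them, support horizontal prime divisors; the marked rays are contracted on $X(\Xi)$ and contribute no divisor at all. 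Consequently your step ``for $u\notin\Box_h$ some horizontal inequality fails for every $g$'' is unjustified exactly when $u$ violates only inequalities attached to marked rays: no prime-divisor test detects this, and your main line of argument then only yields $H^0(X,D_h)_u=\{g\st \Div(g)+h^*(u)\geq 0\}\cdot\chi^u$, which you have not shown to vanish. The failure is stark in the paper's own running example (figure \ref{fig:fansydiv}): there both rays of $\Sigma$ are marked, so $X(\Xi')$ has no horizontal invariant divisors whatsoever, and your argument imposes no condition $u\in\Box_h$ at all.

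The chart localization in your closing paragraph is the correct repair, but it cannot be relegated to ``justifying the order formulas'': at marked cones it is the \emph{only} source of the missing inequalities. Concretely, on a chart $X(\D^\sigma)$ with $\sigma$ marked, each $h_P$ is affine on $\Delta_P^\sigma$, say $h_P=\langle\cdot,u_\sigma\rangle+a_P$, and the Cartier condition makes $h_{|\sigma}(0)=\sum a_P\cdot P$ principal; hence $D_h$ restricts to the divisor of a semi-invariant function, and $H^0(X(\D^\sigma),D_h)_u$ is identified with the weight-$(u-u_\sigma)$ piece of $\bigoplus_{w\in\sigma^\vee\cap M}H^0(Y,\D^\sigma(w))$, which vanishes unless $u-u_\sigma\in\sigma^\vee$. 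These are precisely the $\Box_h$-inequalities along $\sigma$, marked rays included; intersecting over all maximal cones (collecting the vertical conditions chart by chart) gives both $u\in\Box_h$ and $\Div(g)+h^*(u)\geq 0$. Alternatively, one can keep the prime-divisor computation as the main line and supply a separate degree argument showing that $h^*(u)$ admits no nonzero sections when $u$ fails a marked-ray inequality, using $\deg\D^\sigma\subseteq\sigma$ and the principality of $h_{|\sigma}(0)$; but one of these two arguments must be made explicit, and your proposal as written contains neither.
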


\begin{figure}[htbp]
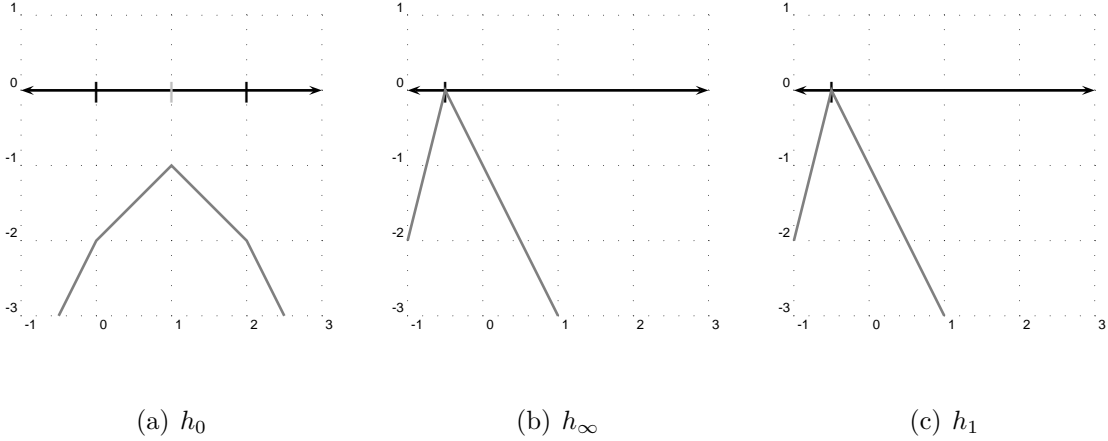

    \centering
    \subfigure[$h_0$]{\hnull}
    \subfigure[$h_\infty$]{\hother}
    \subfigure[$h_1$]{\hother}
    \caption{The support function for $\varphi^*(-2K)-E$}\label{fig:supportfunction}
\end{figure}

\begin{ex}
	Continuing the example from the previous section, the support function $h$ pictured in figure \ref{fig:supportfunction} corresponds to a divisor on $X(\Xi')$. In fact, using the formula for canonical divisors in \cite{lars08}, one easily checks that $D_h=\phi^*(-2K)-E$, where $K$ is a canonical divisor on $X(\Xi)$ and $E$ is the exceptional divisor of $\varphi$. Using proposition \ref{prop:ample}, we easily check that $D_h$ is ample.
\end{ex}

\section{Divisorial Polytopes}\label{sec:dpoly}
\begin{defn}
A divisorial polytope $(\Psi,\Box)$ consists of a lattice polytope $\Box\subset M_\QQ$ and a piecewise affine concave function $$\Psi=\sum \Psi_P\cdot P:\Box\to \DIV_\QQ Y,$$ 
such that
\begin{enumerate}
\item $\deg \Psi(u) > 0$ for $u$ in the interior of $\Box$;
\item $\deg \Psi(u) > 0$ or  $\Psi(u) \sim 0$ for $u$ a vertex of $\Box$;
\item For all $P\in Y$, the graph of $\Psi_P$ is integral, i.e. has its vertices in $M \times \ZZ$.
\end{enumerate}
We often will call the pair $(\Psi,\Box)$ simply $\Psi$.
\end{defn}

The set of divisorial polytopes for fixed lattice $M$ and fixed curve $Y$ form in fact a natural semigroup. Indeed, for divisorial polytopes $(\Psi',\Box')$, $(\Psi'',\Box'')$, we define $\Psi'+\Psi'':(\Box'+\Box'')\to\Div_\QQ Y$ via
$$(\Psi'+\Psi'')(u)=\sum \max_{\substack{u'+u''=u\\ u'\in\Box',\ u''\in \Box''}}\Psi_P'(u')+\Psi_P'(u'').$$
The neutral element is then obviously the constant function $0$ on the $0$ polytope. For any $k\in \NN$ and divisorial polytope $\Psi$, we similarly define $k\cdot \Psi$ to be the $k$-fold sum of $\Psi$.

Before proceeding to associate a marked fansy divisor and support function to a divisorial polytope, we shortly recall the toric construction of a fan from a polytope. Consider a polytope $\Box \subset M_\QQ$. For every face $F$ of $\Box$ we consider the cone $\sigma_F \subset N_\QQ$ consisting of all $v$, such that $\langle v, \cdot \rangle$ obtains its minimum at $F$. These are exactly the inner normal vectors at $F$. The cones $\sigma_F$ form a fan---the normal fan of $\Box$. This fan can be seen as spanned by the regions where the piecewise linear function $\min_{u\in \Box} \langle u, \cdot \rangle$ is linear.
The corresponding face to a given cone $\sigma$ of the normal fan we denote by $F_\sigma$. The described correspondence between faces of $\Box$ and cones of the normal fan is inclusion reversing and map faces of dimension $r$ to cones of dimension $\dim N -r$. Moreover, we have $\langle u-u' \mid u,u' \in F \rangle = \sigma_F^\perp$.

\begin{prop}
Let $\Xi$ be a marked fansy divisor, and $g,h\in\CaSF(\Xi)$ ample. Then 
\begin{enumerate}
	\item $(g^*,\Box_g)$ and $(h^*,\Box_h)$ are divisorial polytopes;
	\item $(g+h)^*=g^*+h^*$;
	\item If $g^*=h^*$, then $g=h$.

\end{enumerate}
	\end{prop}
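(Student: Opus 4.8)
The plan is to treat (i) as the substantial step and to obtain (ii) and (iii) as formal consequences of Legendre duality. For (i) I would first record the elementary structural facts. By strict concavity of $h$, which is part of ampleness (Proposition \ref{prop:ample}), the fan $\Sigma$ is the normal fan of $\Box_h$, so the vertices of $\Box_h$ are exactly the slopes $u_\sigma$ of $h^0$ on the full-dimensional cones $\sigma\in\Sigma$; the integrality built into support functions forces each $u_\sigma\in M$, so $\Box_h$ is a lattice polytope. Each $h_P^*(u)=\min_v\langle v,u\rangle-h_P(v)$ is a minimum of affine functions in $u$, so $\Psi=h^*$ is piecewise affine and concave, and the same integrality duality shows the graph of each $\Psi_P=h_P^*$ has vertices in $M\times\ZZ$, giving the integrality condition for a divisorial polytope.

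The core of (i) is the degree conditions. The key computation is that at a vertex $u_\sigma$ one has $h^*(u_\sigma)=-h_{|\sigma}(0)$: on the cell $\Delta_P^\sigma$ the function $h_P$ is affine with slope $u_\sigma$, so the convex function $\langle\cdot,u_\sigma\rangle-h_P$ attains its minimum $-a_P$ precisely there, whence $h_P^*(u_\sigma)=-a_P$. Thus for marked $\sigma$ the divisor $-h_{|\sigma}(0)$ is principal, giving $\Psi(u_\sigma)\sim 0$, while for unmarked full-dimensional $\sigma$ ampleness gives $\deg\Psi(u_\sigma)=-\deg h_{|\sigma}(0)>0$; this establishes the vertex condition at every vertex. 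Since $\deg\Psi=\sum_P(\deg P)\,h_P^*$ is a nonnegative combination of concave functions it is concave, and being $\geq 0$ at all vertices it is $\geq 0$ on all of $\Box_h$; moreover its zero locus is a face of $\Box_h$, since the minimizers of a concave function over a polytope form a face. Hence either $\deg\Psi>0$ on the interior, which is the remaining condition, or else $\deg\Psi\equiv 0$.

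The main obstacle is ruling out $\deg\Psi\equiv 0$, and here I would invoke ampleness genuinely rather than formally. Applying Proposition \ref{prop:gs} to the ample support functions $mh$, which lie in $\CaSF(\Xi)$ and satisfy $(mh)^*(u)=m\,h^*(u/m)$, gives $h^0(X,mD_h)=\sum_{u\in m\Box_h\cap M}h^0(Y,(mh)^*(u))$. If $\deg\Psi\equiv 0$ then every summand has degree $0$, so the curve estimate $h^0(Y,D)\leq\max(0,\deg D+1)$ bounds the total by $O(m^{\dim N})$; but $D_h$ ample, hence big, forces growth of order $m^{\dim N+1}=m^{\dim X}$ by asymptotic Riemann–Roch, a contradiction. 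Therefore $\deg\Psi\not\equiv0$ and the interior positivity holds. An alternative is to quote the degree/volume formula for $D_h^{\dim X}$ once it is available.

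For (ii) the identities $\Box_{g+h}=\Box_g+\Box_h$ (support function of a Minkowski sum) and $(g+h)_P^*(u)=\max_{u'+u''=u}g_P^*(u')+h_P^*(u'')$ are exactly the statement that the concave conjugate of a sum is the sup-convolution of the conjugates; comparing with the semigroup law on divisorial polytopes yields $(g+h)^*=g^*+h^*$, and one checks separately that $g+h$ is ample so that this is a genuine divisorial polytope by (i). For (iii), conjugation is an involution on closed concave functions: each $h_P$ is piecewise affine, hence recovered as the biconjugate $h_P(v)=\min_{u\in\Box_h}\langle v,u\rangle-\Psi_P(u)$, while $h^0$ is recovered as the support function of $\Box_h$. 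Thus $g^*=h^*$ forces $\Box_g=\Box_h$, then $g_P=h_P$ for all $P$ and $g^0=h^0$, i.e. $g=h$. These last two parts are routine given the machinery; the real work, and the only place where ampleness enters beyond the vertex computation, is the interior positivity in (i).
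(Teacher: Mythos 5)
Your proof is correct, and its core coincides with the paper's own argument: the identity $h^*(u_\sigma)=-h_{|\sigma}(0)$ at the vertices of $\Box_h$ (which are exactly the slopes of $h^0$ on maximal cones, by strict concavity), the deduction of the vertex condition from ampleness (unmarked cones) and the Cartier condition (marked cones), and the integrality of the graphs of the $h_P^*$ by Legendre duality; parts (ii) and (iii) as conjugation formalities also match the paper, which dismisses them as ``easily seen from the definitions.'' Where you genuinely go beyond the paper is the interior positivity $\deg h^*(u)>0$ for $u$ in the interior of $\Box_h$: the paper verifies only the vertex and integrality conditions and then asserts ``the first claim follows,'' and this is a real (if small) gap, since concavity plus nonnegativity at the vertices only yields the dichotomy that either $\deg h^*>0$ on the interior or $\deg h^*\equiv 0$ --- and the latter is not excluded by the vertex data alone when every maximal cone is marked (e.g.\ toric downgrades, where all vertex degrees do vanish). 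Your section-growth argument --- if $\deg h^*\equiv 0$ then $h^0(X,mD_h)$ is bounded by the number of lattice points in $m\Box_h$, hence $O(m^{\dim N_\QQ})$, contradicting the $m^{\dim X}$ growth forced by ampleness --- correctly rules this out; an alternative that stays inside the combinatorics is to observe that $\deg h^*\equiv 0$ would force $\deg \D^\sigma = \sigma$ for the marked maximal cones, contradicting the properness requirement $\deg \D^\sigma \subsetneq \sigma$ in the definition of a marked fansy divisor. One small slip to fix: the zero locus of a nonnegative concave function on a polytope need not be a face --- for $\min(x,1-x)$ on $[0,1]$ it is $\{0,1\}$; the correct statement, and the one your argument actually needs, is that this zero locus is a union of faces, so that a zero at an interior point forces the function to vanish identically.
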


\begin{proof}
  Every  maximal cone $\sigma \in \tail \Xi$ corresponds to a vertex $u_\sigma$ of $\Box_g$. Moreover the concaveness of $g$ implies that $-g|_{\sigma}(0) = g^*(u_\sigma)$. Now the ampleness condition on $g$ implies that 
$\deg g^*(u_\sigma)>0$ for unmarked $\sigma$ and the Cartier condition implies that $g^*(u_\sigma) \sim 0$ for marked $\sigma$. Since $g_P$ is integral the same is true for the graph of $g^*_P$. The first claim follows.
The remaining two claims are easily seen from the definitions of $g^*$ and $h^*$.
\end{proof}

We now show how to associated a marked fansy divisor and support function to a divisorial polytope $(\Psi,\Box)$. We begin by setting $\Psi_P^*(v)=\min_{u\in \Box} (\langle v,u\rangle -\Psi_P(u))$, which is a piecewise affine concave function on $N_\QQ$. Now let $\Xi_P$ be the polyhedral subdivision of $N_\QQ$ induced by $\Psi_P^*$ and take $\Xi=\sum \Xi_P\cdot P$. Furthermore, we add a mark to an element $\sigma\in\tail(\Xi)$ if $(\deg \circ \Psi)|_{F_\sigma} \equiv 0$, where $F_\sigma \prec \Box$ is the face where $\langle \cdot, v \rangle$ takes its minimum for all $v \in \sigma$.

\begin{thm}
Using notation from the above construction, $\Xi$ is a marked fansy divisor, and $\Psi^*=\sum \Psi_P^*\cdot P\in \CaSF(\Xi)$ is a support function satisfying the properties that 
\begin{enumerate}
	\item $\Psi^*$ is ample;
	\item $(\Psi^{* *},\Box_{\Psi^*})=(\Psi,\Box)$.
\end{enumerate}
Thus, the above construction induces a correspondence between divisorial polytopes and pairs $(X,\mathcal{L})$ of complexity-one varieties with an invariant ample line bundle.
\end{thm}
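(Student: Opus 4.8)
The plan is to establish the theorem by verifying each combinatorial axiom of a marked fansy divisor directly from the divisorial polytope conditions, then using the earlier propositions to recover the line bundle data. The key technical device is the Legendre-type duality between $\Psi_P$ and $\Psi_P^*$: since $\Psi_P$ is piecewise affine concave on $\Box$ with integral graph, its dual $\Psi_P^*(v)=\min_{u\in\Box}(\langle v,u\rangle-\Psi_P(u))$ is piecewise affine concave on $N_\QQ$, and the biduality $(\Psi_P^*)^*=\Psi_P$ holds over $\Box$. The subdivision $\Xi_P$ is precisely the domains of linearity of $\Psi_P^*$, and its tailfan is the normal fan of $\Box$, which is independent of $P$; this immediately yields axiom \ref{item:complete} and the fact that $(\Psi^*)^0$, the linear part, is $P$-independent (it equals $\min_{u\in\Box}\langle\cdot,u\rangle$), giving membership in $\SF(\Xi)$.

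First I would check $\Psi^*\in\CaSF(\Xi)$: for a marked $\sigma$ the quantity $\Psi^*_{|\sigma}(0)=\sum_P a_P\cdot P$ equals $-\deg\circ\Psi$ evaluated appropriately, and I would show $\Psi^*_{|\sigma}(0)=\Psi^*(u_\sigma)$ where $u_\sigma=F_\sigma$ is the vertex dual to $\sigma$. By the marking rule, $\sigma$ is marked exactly when $(\deg\circ\Psi)|_{F_\sigma}\equiv 0$, and combined with divisorial polytope condition \ref{item:con2} ($\Psi(u_\sigma)\sim 0$) this forces $\Psi^*(u_\sigma)$ to be principal, which is the Cartier condition. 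Next, for ampleness I would invoke Proposition \ref{prop:ample}: strict concavity of $\Psi^*$ follows because the $\Xi_P$ are genuinely the linearity subdivisions of $\Psi_P^*$ (so $\Psi_P^*$ is strictly concave across them), and the condition $-\deg\Psi^*_{|\sigma}(0)>0$ for unmarked full-dimensional $\sigma$ is exactly divisorial polytope condition \ref{item:con1}/\ref{item:con2} saying $\deg\Psi(u_\sigma)>0$ at the corresponding vertex.

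The properness needed for axiom \ref{item:proper} I would extract from ampleness together with Proposition \ref{prop:gs}, and axioms \ref{item:open1}--\ref{item:open2} concerning which faces lie in $C$ I would verify by matching the marking rule against the degree conditions: a face $\tau\prec\sigma$ is marked iff $\deg\D^\sigma\cap\tau\neq\emptyset$, which translates under the $\Box$–normal-fan correspondence into the statement that $(\deg\circ\Psi)$ vanishes on the corresponding face of $\Box$. For the biduality claim (ii), I would compute $\Psi^{**}=\sum_P(\Psi_P^*)^*\cdot P$ and $\Box_{\Psi^*}$ directly: the weight polytope $\Box_{\Psi^*}$ recovers $\Box$ because $(\Psi^*)^0=\min_{u\in\Box}\langle\cdot,u\rangle$ has weight polytope exactly $\Box$, and pointwise $(\Psi_P^*)^*=\Psi_P$ by classical concave biduality on the lattice polytope.

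\textbf{The hard part} will be verifying axioms \ref{item:open1} and \ref{item:open2}, i.e.\ that the marking set $C$ defined via the vanishing locus of $\deg\circ\Psi$ on faces of $\Box$ satisfies the precise compatibility $\tau\in C\iff\deg\D^\sigma\cap\tau\neq\emptyset$. This requires carefully translating the polytope-side degree function into Minkowski-sum information about the polyhedral coefficients $\Delta_P^\sigma$, and confirming that $\deg\D^\sigma=\sum_P\Delta_P^\sigma$ meets the face $\tau$ exactly when $\deg\circ\Psi$ vanishes on the dual face $F_\tau$. The concavity and degree conditions of the divisorial polytope must be shown to be not merely sufficient but exactly equivalent to the face-marking rule, and getting this equivalence to hold simultaneously at all points $P$ (rather than just generically) is where the argument demands the most care.
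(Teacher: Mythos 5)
Your overall architecture matches the paper's proof: the tailfan of each $\Xi_P$ is the normal fan of $\Box$, the Cartier condition at marked cones follows from $\Psi^*_{|\sigma}(0)=-\Psi(u_\sigma)$ combined with the vertex condition ($\deg\Psi(u_\sigma)>0$ or $\Psi(u_\sigma)\sim 0$), ampleness follows from Proposition \ref{prop:ample} via strict concavity and $\deg\Psi(u_\sigma)>0$ at unmarked vertices, axioms \ref{item:open1} and \ref{item:open2} are checked by translating markings into vanishing of $\deg\circ\Psi$ on dual faces of $\Box$, and biduality is Legendre duality. (A small notational slip: $\Psi^*_{|\sigma}(0)$ is a divisor on $Y$, namely $-\Psi(u_\sigma)$, not ``$-\deg\circ\Psi$''; and $\Psi^*(u_\sigma)$ is not meaningful, since $\Psi^*$ is a function on $N_\QQ$ while $u_\sigma\in M_\QQ$.)

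However, there is a genuine gap in your treatment of axiom \ref{item:proper}, the properness of $\D^\sigma$ for marked full-dimensional $\sigma$. You propose to ``extract'' it from ampleness together with Proposition \ref{prop:gs}. This is circular: ampleness is a property of the divisor $D_{\Psi^*}$ on the variety $X(\Xi)$, and both $X(\Xi)$ and the group $\CaSF(\Xi)$ in which $\Psi^*$ is supposed to live are only defined once $\Xi$ is known to be a marked fansy divisor --- which is exactly what axiom \ref{item:proper} is needed for. Properness must therefore be verified purely combinatorially, before any geometric statement about $X(\Xi)$ can be invoked, and your proposal contains no mechanism for the hardest half of it. The paper argues in two steps. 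First, $\deg\D^\sigma\subsetneq\sigma$: for $v\notin\sigma$ the minimum of $\langle v,\cdot\rangle-\deg\Psi(\cdot)$ cannot be attained at $u_\sigma$ because $\deg\Psi\geq 0$ on $\Box$, and $0\notin\deg\D^\sigma$ because $\deg\Psi>0$ somewhere. Second --- the step you miss entirely --- when $\deg\D^\sigma(w)=0$ one must exhibit a principal multiple of $\D^\sigma(w)$. The paper reduces to the case $w=\lambda(u'-u)$ with $\tau=\sigma\cap\sigma'$ a facet and $u'$ the vertex dual to $\sigma'$, uses the already-established axiom \ref{item:open1} to produce, for every $P$, a point $v_P\in\Delta_P^u\cap\Delta_P^{u'}$, deduces $\min\langle\Delta_P^u,u'-u\rangle=\Psi_P(u')-\Psi_P(u)$, and hence $\D^\sigma(\lambda(u'-u))=\lambda\cdot(\Psi(u')-\Psi(u))$, which is principal because the vertex condition forces $\Psi(u)\sim 0$ and $\Psi(u')\sim 0$ at these marked vertices. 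Note the logical order the paper needs here: axiom \ref{item:open1} is proved \emph{before} and then \emph{used in} the proof of properness; your plan inverts this dependency and leaves the principality requirement unaddressed.
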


\begin{proof}
  The maximal polyhedra in $\Xi_P$ consist of those $v$ such that  the minimum of $(\langle v ,\cdot \rangle -\Psi_P(\cdot))$ is realized by the same vertex $u \in \Box$. We will denote such a polytope by $\Delta_P^u$.
For $w \in \Delta^u_P$ and $v \in \sigma_u$ we obviously get $v+w \in \Delta_u$. Hence, the tail fan of $\Xi_P$ is exactly the normal fan of $\Box$.

Next we have to check that the properties \ref{item:proper}-\ref{item:open2} for the markings of a fansy divisor are fulfilled. For condition~\ref{item:open2}  we have to check that for a marked cone all cones that contain it are marked, too. By our setting of marks this corresponds to the fact that if $(\deg \circ \Psi)|_{F} \equiv 0$ holds this is also true for all faces of $F$.

We now turn to conditions \ref{item:proper} and \ref{item:open1}. Fix some vertex $u$ of $\Box$ with $\deg \Psi(u) = 0$, and let $\sigma$ be the corresponding cone. We now consider some $v \notin \sigma$. 
This implies that $\langle v, \cdot \rangle$ does not get minimal at $u$. Since $\deg \Psi(u') \geq 0$ the minimum of $(\langle v , \cdot \rangle -\deg \Psi(\cdot))$ also cannot be realized at $u$ and $v \notin \sum_P \Delta_P^u = \deg \D^\sigma$. Since $\deg \Psi(u')>0$ for some $u'$ we also infer that $0 \notin \deg \D^\sigma$. Hence, we obtain $\deg \D^\sigma \subsetneq \sigma$.

We now assume that  $\deg \D^\sigma \cap \tau \neq \emptyset$ for some face $\tau$ of $\sigma$. We choose some $v \in \deg \D^\sigma \cap \tau$. Since $v \in \deg \D^\sigma$ we know that $(\langle v , \cdot \rangle -\deg \Psi(\cdot))$ obtains its minimum at $u$. Hence, $(\langle v , u' \rangle -\deg \Psi(u')) \geq (\langle v , u \rangle -\deg \Psi(u))$ for any element $u' \in \Box$.  For $u' \in F_\tau$ we get $\langle v , u \rangle = \langle v , u' \rangle$, since $u'-u\in \tau^\perp$. This implies that $\deg \Psi(u')=\deg \Psi(u)=0$. Hence, $(\deg \circ \Psi)|_{F_\tau} \equiv 0$. By construction of $\Xi$ we thus have that $\tau$ is marked, too. For the other direction let us assume that $(\deg \circ \Psi)|_{F_\tau} \equiv 0$ for some $\tau \prec \sigma \in C$. We choose any interior point $v \in \relint \tau$.  We know that the elements of $\deg \D^\sigma$ are those $v$ such that $(\langle v ,\cdot \rangle -\Psi_P(\cdot))$ takes its minimum at $u = F_\sigma$. For any $u'' \notin F_\tau$ we then get $\langle v , u'' \rangle > \langle v , u \rangle$ and hence 
 $(\langle k \cdot v , u'' \rangle -\deg \Psi(u')) > (\langle k \cdot v , u \rangle -\deg \Psi(u))$ for $k \gg 0$. Since $\deg \Psi(u') = \deg \Psi(u)$ holds for $u' \in F_\tau$ we conclude that  $k \cdot v \in \deg \D^\sigma \cap \tau$. This proves  \ref{item:open1}.

 To finish the proof of \ref{item:proper}, assume that $\deg \D^\sigma(w) = 0$. We have to show that a multiple of $\D^\sigma(w)$ is principal. Without loss of generality we may assume that $\tau=\Face(\sigma,u')$ is a facet. Thus $\tau=\sigma \cap \sigma'$ for another maximal cone $\sigma'$ with corresponding vertex $u'$. Now $w=\lambda \cdot (u'-u)$ and $u'-u \in \tau^\perp$. By the last step we know that $\deg \D^{\sigma'} \cap \tau \neq \emptyset$ and hence 
 for every $P$ there is a $v_P \in \Delta_P^u \cap \Delta_P^{u'}$. This implies that $(\langle v_P ,u \rangle -\Psi_P(u))=(\langle v_P ,u' \rangle -\Psi_P(u'))$. Thus, we get $\min\langle \Delta^u_P, u'-u\rangle = \langle v_P ,u'-u \rangle= \Psi_P(u')-\Psi_P(u)$. Now condition \ref{item:proper} follows from the fact that $\Psi(u)$ and $\Psi(u')$ are principal, since $\D^\sigma(\lambda \cdot (u'-u))= \lambda \cdot (\Psi(u')-\Psi(u))$.

$\Psi^*_P$ is strictly concave on $\Xi_P$ by the construction of $\Xi$. Furthermore, for 
$\sigma$ maximal we have $\Psi^*|_\sigma(0)=-\Psi(u_\sigma)$. Hence, the ampleness follows from the condition $\deg \Psi(u)>0$ for $\sigma_u$ unmarked. Finally, a simple calculation shows that $(\Psi^{**},\Box_{\Psi^*})=(\Psi,\Box)$.
\end{proof}

\begin{remark}
  Two divisorial polytopes $(\Psi,\Box)$ and $(\Psi',\Box')$ give rise to isomorphic pairs $(X,\mathcal{L})$ and $(X',\mathcal{L}')$ if and only if there exist isomorphisms $F:M' \rightarrow M$, $\varphi:Y \rightarrow Y'$ and a linear map $A$ from $M'$ to the principal divisors on  $Y'$ such that
\[
\Box=F(\Box') \quad \text{ and } \quad \Psi'=\varphi^*F^*\Psi + A.
\]
\end{remark}

\begin{remark}
  Let $\Delta \subset M'_\QQ$ be a polytope in some lattice $M'$. Consider an exact sequence 
\[
 0 \rightarrow \ZZ \stackrel{F}{\rightarrow} M' \stackrel{G}{\rightarrow} M \rightarrow 0
\]
corresponding to the torus inclusion $T_M \hookrightarrow T_{M'}$ of codimension $1$. We choose a section $s:M \hookrightarrow M'$ and consider the map $\Psi_\Delta: G(\Delta) \rightarrow \WDiv(\PP^1)$ given by
\begin{align*}
  (\Psi_\Delta)_0(u) &= \max \{a \in \QQ \mid F_\QQ(a) + s(u) \in \Delta \cap G_\QQ^{-1}(u)\},\\
  (\Psi_\Delta)_\infty(u) &= -\min \{a \in \QQ \mid F_\QQ(a) + s(u) \in \Delta \cap G_\QQ^{-1}(u)\}.
\end{align*}
Then $(\Psi_\Delta, G(\Delta))$ is a divisorial polytope. Moreover, the construction above yields for $\Psi_\Delta$ exactly the toric variety and the ample divisor corresponding to $\Delta$ but with the restricted torus action of $T_{M}$.
\end{remark}

\begin{figure}[htbp]
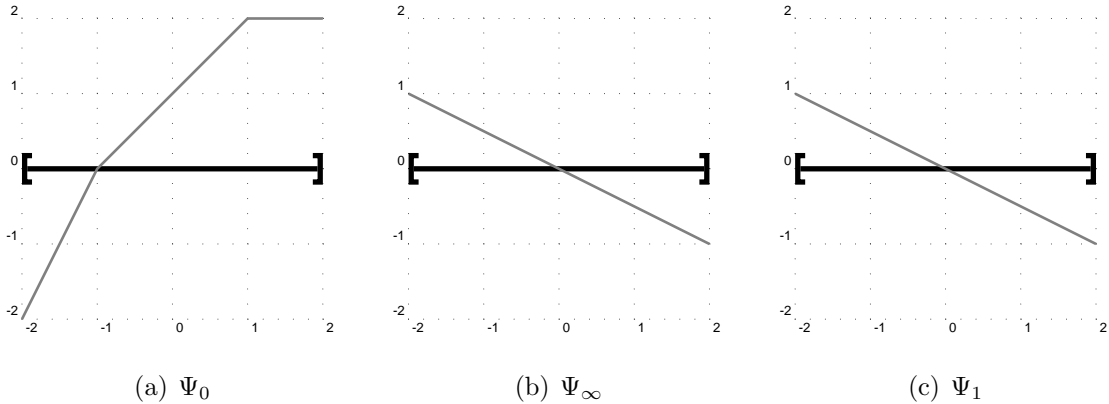

    \centering
    \subfigure[$\Psi_0$]{\hstarnull}
    \subfigure[$\Psi_\infty$]{\hstarother}
    \subfigure[$\Psi_1$]{\hstarother}
    \caption{A divisorial polytope on $\mathbb{P}^1$}\label{fig:divpoly}
\end{figure}

\begin{ex}
	Consider the divisorial polytope $\Psi$ on the interval $[-2,2]$ pictured in figure \ref{fig:divpoly}. One easily checks that the corresponding marked fansy divisor is exactly $\Xi'$ from the example in section \ref{sec:tvar}, and that the corresponding support function is exactly the function $h$ from the example in section \ref{sec:div}. Conversely, one easily checks that $h^*=\Psi$.
\end{ex}

We now describe how to read off simple geometric information of a projective $T$-variety from the corresponding divisorial polytope. For the following, we fix some divisorial polytope $(\Psi,\Box)$ with corresponding projective variety $X$ and ample divisor $D$.  We first use our divisorial polytope to define some other polytopes. 
\begin{defn}
For a finite set of points $I\subset Y$, define\begin{equation*}
\Delta(\Psi,I):=	\conv \left(\Big\{ \big(u,\sum_{P\in I}\Psi_P(u)\big)\big| u \in \Box\Big\}\bigcup\Big\{ \big(u,\sum_{P\notin I}-\Psi_P(u)\big)\big| u \in \Box\Big\}\right)\subset M_\QQ \times \QQ.
\end{equation*}
For any point $P\in Y$, define
\begin{equation*}
	\widetilde \Delta(\Psi,P):=	\conv \left(\Big\{ \big(u,\Psi_P(u)\big)\big| u \in \Box\Big\}\bigcup \Box\times \min_{u\in\Box} \Psi_P(u)\right)\subset M_\QQ \times \QQ.
\end{equation*}
Note that although $\Delta(\Psi,I)$ need not have lattice vertices, $\widetilde \Delta(\Psi,P)$ is always a lattice polytope.
\end{defn}

\begin{prop}
Let $m=\dim M_\QQ$. Then we have
$$D^{m+1}=(m+1)!\cdot \vol\Delta(\Psi,I)$$ for any set of points $I\subset Y$.
\end{prop}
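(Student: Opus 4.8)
The plan is to compute the top self-intersection $D^{m+1}$ as $(m+1)!$ times the leading coefficient of the Hilbert polynomial of $(X,D)$, and then to identify that coefficient with $\vol\Delta(\Psi,I)$. Since $D$ is ample, Serre vanishing gives $H^i(X,kD)=0$ for $i>0$ and $k\gg 0$, so $\dim H^0(X,kD)$ agrees for large $k$ with the Hilbert polynomial $\chi(X,kD)$, which has degree $\dim X=m+1$ and leading coefficient $D^{m+1}/(m+1)!$. Thus it suffices to extract the leading term of $k\mapsto\dim H^0(X,kD)$.

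Writing $D=D_h$ with $h=\Psi^*\in\CaSF(\Xi)$, we have $kD=D_{kh}$, and the scaling behaviour of support functions gives $\Box_{kh}=k\Box$ together with $(kh)^*(u)=k\,\Psi(u/k)$ for $u\in k\Box$ (here I use $\Psi^{**}=\Psi$ from the correspondence theorem). Proposition \ref{prop:gs} then yields the graded decomposition
$$\dim H^0(X,kD)=\sum_{u\in k\Box\cap M}\dim H^0\bigl(Y,k\,\Psi(u/k)\bigr).$$
I would next apply Riemann--Roch on the smooth projective curve $Y$. For $u$ in the interior of $\Box$ the degree $\deg\bigl(k\,\Psi(u/k)\bigr)=k\deg\Psi(u/k)$ grows linearly in $k$, so for $k\gg 0$ the $h^1$-term vanishes and $\dim H^0\bigl(Y,k\,\Psi(u/k)\bigr)=k\deg\Psi(u/k)+O(1)$, where the $O(1)$ absorbs the genus $g$ of $Y$ and the bounded error from replacing the $\QQ$-divisor $k\,\Psi(u/k)$ by its round-down $\lfloor k\,\Psi(u/k)\rfloor$.

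Summing over $u\in k\Box\cap M$, the $O(1)$ corrections contribute only $O(k^m)$, since there are $O(k^m)$ lattice points in $k\Box$; hence they do not affect the leading coefficient. The main term $\sum_{u\in k\Box\cap M}k\deg\Psi(u/k)$ is $k$ times a Riemann sum for the continuous (piecewise affine concave) function $\deg\Psi$ on $\Box$ sampled on $\tfrac1k M$, so as $k\to\infty$ it equals $k^{m+1}\int_\Box\deg\Psi(u)\,du+o(k^{m+1})$. Therefore the leading coefficient is $\int_\Box\deg\Psi(u)\,du$ and $D^{m+1}=(m+1)!\int_\Box\deg\Psi(u)\,du$. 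It then remains to identify this integral with $\vol\Delta(\Psi,I)$. Put $f_I(u)=\sum_{P\in I}\Psi_P(u)$, which is concave, and $\ell_I(u)=-\sum_{P\notin I}\Psi_P(u)$, which is convex; since $\deg\Psi(u)=f_I(u)-\ell_I(u)\ge 0$ on $\Box$ (by conditions (i), (ii) together with concavity of $\deg\Psi$), the convex hull defining $\Delta(\Psi,I)$ is exactly the region $\{(u,t)\mid u\in\Box,\ \ell_I(u)\le t\le f_I(u)\}$ between the two graphs, because each fibre over $u\in\Box$ is the segment joining $(u,\ell_I(u))$ and $(u,f_I(u))$. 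Its volume is $\int_\Box\bigl(f_I(u)-\ell_I(u)\bigr)\,du=\int_\Box\deg\Psi(u)\,du$, which is manifestly independent of $I$, giving the claim and simultaneously explaining why $I$ may be chosen arbitrarily.

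The main obstacle is the asymptotic bookkeeping in the middle step: one must verify that every error term is genuinely $O(k^m)$ and hence invisible to the top coefficient. This requires care for lattice points $u$ near the boundary of $\Box$, where $\deg\Psi(u/k)$ may be small and the $h^1$-terms need not vanish, as well as for the boundary lattice points of $k\Box$ and the floor corrections of the $\QQ$-divisors $k\,\Psi(u/k)$; all of these are supported on a set of $O(k^m)$ indices and contribute bounded amounts each, so they drop out of the leading coefficient. The rest of the argument is a direct application of Proposition \ref{prop:gs}, the scaling of support functions, and curve Riemann--Roch.
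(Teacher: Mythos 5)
Your argument is correct, but it is not the paper's argument: the paper disposes of this proposition in one line, by citing Proposition 3.31 of \cite{lars08}, so what you have written is a genuine, self-contained alternative. Your route --- Serre vanishing plus the Snapper/asymptotic Riemann--Roch identification of $D^{m+1}/(m+1)!$ with the leading coefficient of $k\mapsto h^0(X,kD)$, then the weight decomposition of Proposition \ref{prop:gs} with the scaling $\Box_{kh}=k\Box$, $(kh)^*(u)=k\Psi(u/k)$, Riemann--Roch on the curve $Y$, and a Riemann-sum limit --- is in fact the leading-order version of the computation the paper itself performs later when bounding the Hilbert polynomial ($E_\Psi\geq \HP_D\geq E_\Psi-g(Y)\cdot E_\Box$), which uses exactly the same decomposition $\HP_D(k)=\sum_{u\in k\Box\cap M}h^0(Y,(k\cdot\Psi)(u))$; so your proof dovetails nicely with the rest of the paper even though the paper never spells it out for the degree formula. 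Two points in your write-up deserve emphasis because they are where the bookkeeping could fail: first, the per-point error $|h^0-\deg\lfloor\cdot\rfloor|$ is uniformly $O(1)$ because $\deg\lfloor k\Psi(u/k)\rfloor$ is bounded below by $-\#\mathcal{P}$, whence $h^1=h^0(K-E)\leq\max(0,2g-1-\deg E)$ is bounded independently of $u$ and $k$ (your appeal to ``$h^1$ vanishes for $k\gg0$'' alone would not be uniform near $\partial\Box$, but your final paragraph correctly repairs this); second, the identification $\Delta(\Psi,I)=\{(u,t)\mid u\in\Box,\ \ell_I(u)\leq t\leq f_I(u)\}$ needs $\deg\Psi\geq 0$ on all of $\Box$, which follows from continuity of the piecewise affine concave function $\deg\Psi$ together with positivity on the interior. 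What each approach buys: the citation is short and attributes the formula to its source; your proof makes the statement self-contained, exhibits the invariant as $D^{m+1}=(m+1)!\int_\Box\deg\Psi(u)\,du$, and thereby explains conceptually why $\vol\Delta(\Psi,I)$ is independent of the choice of $I$, a fact the bare statement leaves looking like a coincidence.
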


\begin{proof}
	See proposition 3.31 of \cite{lars08}.
\end{proof}

For any polytope $\Delta$ with lattice vertex $v$, we say that $\Delta$ is \emph{smooth at $v$} if the directions of $\Delta$ at $v$ form a lattice basis. Now for any $P\in Y$, consider some $v\in \Box$ with $(v,\Psi_P(v))$ a vertex of the graph of $\Psi_P$.

\begin{defn}
We say that $\Psi$ is smooth at $(P,v)$ if
\begin{enumerate}
	\item For $\deg \Psi(v)>0$, $\Delta(\Psi,P)$ is smooth at $(v,\Psi_P(v))$;\label{item:nocontract}
	\item For $\deg \Psi(v)=0$, $Y=\mathbb{P}^1$ and there exist points $P_1,P_2\in Y$ such that for all points $P\neq P_1,P_2$, $(v,\Psi_P(v))$ is contained in only one full-dimensional polytope in $\Gamma_{\Psi_P}$ which additionally has integral slope, and the polytope $\Delta(\Psi,P_1)$ is smooth at $(v,\Psi_{P_1}(v))$.\label{item:contract}
\end{enumerate}
\end{defn}

\begin{prop}
	The $T$-variety $X$ corresponding to $(\Psi,\Box)$ is smooth if and only if for every $P\in Y$ and every $v\in \Box$ with $(v,\Psi_P(v))$ a vertex of $\Gamma_{\Psi_P}$, $\Psi$ is smooth at $(P,v)$.
\end{prop}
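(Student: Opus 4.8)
The plan is to reduce the global statement to a local smoothness test at the closed orbits of $X$ and to match these tests against the two cases of the definition via the value of $\deg\Psi(v)=\sum_P\Psi_P(v)$. Since $X=X(\Xi)$ carries an effective $T$-action, its singular locus is closed and $T$-invariant, hence, if nonempty, it contains a closed orbit; it therefore suffices to test smoothness at a point of each minimal orbit. Using the covering of $X$ by the affine charts $X(\D^\sigma)$ and $X(\Delta\cdot P_i+\sum_{j\neq i}\emptyset\cdot P_j)$ produced in Proposition~\ref{prop:markedfansydiv}, I would organize the check chart by chart. Under the Legendre-type duality between $\Psi_P$ and $\Psi_P^*$ that was used to build $\Xi$, the full-dimensional cells $\Delta_P^{v}$ of the slice $\Xi_P$ correspond exactly to the vertices $(v,\Psi_P(v))$ of the graph $\Gamma_{\Psi_P}$. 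These cells, together with the number $\deg\Psi(v)$, index the minimal orbits and split the analysis into the non-contracting case $\deg\Psi(v)>0$ and the contracting case $\deg\Psi(v)=0$.

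In the case $\deg\Psi(v)>0$ the relevant orbit lies on a chart whose local model is toric. The first observation is that $\deg\Psi(v)=\sum_P\Psi_P(v)$ is precisely the vertical extent of the polytope $\Delta(\Psi,P)$ over $u=v$: its upper boundary is the graph of the concave function $\Psi_P$ and its lower boundary the graph of the convex function $-\sum_{P'\neq P}\Psi_{P'}$, so the fibre over $v$ is a segment of length $\deg\Psi(v)$. Hence $\deg\Psi(v)>0$ guarantees that $(v,\Psi_P(v))$ is a genuine vertex of the $(m+1)$-dimensional lattice polytope $\Delta(\Psi,P)$, not a pinch point. I would then identify the local model of $X$ at the corresponding fixed point with the affine toric variety determined by $\Delta(\Psi,P)$ at this vertex, i.e.\ the cone spanned by its edge directions there. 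By the standard toric smoothness criterion this model is smooth exactly when those directions form a lattice basis, which is the assertion that $\Delta(\Psi,P)$ is smooth at $(v,\Psi_P(v))$. This reproduces the non-contracting condition of the definition.

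The case $\deg\Psi(v)=0$ is the contracting case and is the main obstacle. Here the relevant chart is an $X(\D^\sigma)$ with $\sigma$ marked, i.e.\ of complete locus, so over the closed orbit the whole curve $Y$ is contracted and the local geometry genuinely involves $Y$. The plan is first to show that smoothness forces $Y\cong\PP^1$: a contracting closed orbit over a base of positive genus cannot be smooth, since the local model is a cone over a fibration involving $Y$ and is singular unless $Y$ is rational; combined with the torus structure this pins $Y$ down to $\PP^1$. Assuming $Y=\PP^1$, I would analyze the chart explicitly by writing down generators of $\bigoplus_u H^0(\PP^1,\D^\sigma(u))$ and reading off smoothness: it holds precisely when at most two points $P_1,P_2$ carry nontrivial slice data at $v$, at every other $P$ the function $\Psi_P$ is affine near $v$ with integral slope (so that the fibre over $P$ is reduced and smooth, i.e.\ $(v,\Psi_P(v))$ lies in a single full-dimensional piece of $\Gamma_{\Psi_P}$ of integral slope), and the transverse toric polytope $\Delta(\Psi,P_1)$ is smooth at $(v,\Psi_{P_1}(v))$. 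This is exactly the contracting condition.

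Finally I would assemble the two cases: $X$ is smooth if and only if it is smooth at every minimal orbit, if and only if the appropriate one of the two conditions holds at every pair $(P,v)$ with $(v,\Psi_P(v))$ a vertex of $\Gamma_{\Psi_P}$, which is the claim. The delicate step is the contracting case $\deg\Psi(v)=0$: both the necessity of $Y\cong\PP^1$ and the exact count of two admissible special fibres together with the integral-slope condition demand a careful local computation of the graded algebra of $\D^\sigma$, whereas the non-contracting case is a direct translation of toric smoothness through the polytope $\Delta(\Psi,P)$.
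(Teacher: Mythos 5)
Your reduction is structurally the same as the paper's: the vertices $(v,\Psi_P(v))$ of $\Gamma_{\Psi_P}$ correspond, via the Legendre duality used to build $\Xi$, to invariant affine charts of $X$, and the analysis splits according to whether $\deg\Psi(v)>0$ (chart with affine locus) or $\deg\Psi(v)=0$ (chart with complete locus). The difference lies in what happens after this reduction: the paper at that point simply invokes the known smoothness criteria for affine complexity-one $T$-varieties, namely Theorem 3.3 of \cite{suess09} in the affine-locus case and Proposition 3.1 of \cite{suess09} in the complete-locus case, whereas you propose to reprove both. Your sketch of the non-contracting case \ref{item:nocontract} is essentially right: the tangent cone of $\Delta(\Psi,P)$ at $(v,\Psi_P(v))$ is, after flipping the sign of the last coordinate, dual to $\cone(\Delta_P^v\times\{1\})\subset N_\QQ\times\QQ$, where $\Delta_P^v$ is the cell of $\Xi_P$ dual to $v$, and duality preserves regularity of full-dimensional pointed cones. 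But even here the key assertion---that the chart is formally (or \'etale-) locally toric along the fibre over $P$, with cone $\cone(\Delta_P^v\times\{1\})$---is exactly the content of the cited theorem and is stated by you rather than proved; it requires a local trivialization of $\D$ at $P$. (A minor slip: $\Delta(\Psi,P)$ is in general \emph{not} a lattice polytope, as the paper remarks; only its upper vertices are integral, which is what makes ``smooth at $(v,\Psi_P(v))$'' well-posed.)

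The genuine gap is the contracting case $\deg\Psi(v)=0$, i.e.\ criterion \ref{item:contract}, which you yourself flag as the delicate step but then do not carry out: you state the expected outcome of ``a careful local computation of the graded algebra of $\D^\sigma$'' instead of performing it. Three separate claims there need actual proofs: first, that smoothness at the contracted point forces $Y\cong\PP^1$ (your one-line argument that ``a cone over a fibration involving $Y$ is singular unless $Y$ is rational'' must be made precise, e.g.\ via a resolution whose exceptional locus dominates $Y$ together with rationality of smooth points); second, that smoothness forces all but two points $P_1,P_2$ to have $(v,\Psi_P(v))$ lying in a single full-dimensional piece of $\Gamma_{\Psi_P}$ with integral slope; and third, that under these conditions smoothness is equivalent to smoothness of $\Delta(\Psi,P_1)$ at the pinched vertex (note the fibre of $\Delta(\Psi,P_1)$ over $v$ has length $\deg\Psi(v)=0$, so this vertex is degenerate and the toric argument from the first case does not apply verbatim). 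This is precisely the content of Proposition 3.1 of \cite{suess09}, which the paper disposes of by citation; without either that citation or the computation itself, the hard half of your proof is missing.
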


\begin{proof}
	The vertices $(v,\Psi_P(v)$ of the graphs of $\Psi_P$ correspond to affine invariant charts of the corresponding variety. If $\deg \Psi(v)>0$, then the corresponding chart has affine locus, and one easily checks that criterion \ref{item:nocontract} corresponds to the hypothesis of theorem 3.3 of \cite{suess09}. On the other hand, if $\deg \Psi(v)>0$, the corresponding chart has complete locus and the criterion \ref{item:contract} corresponds to the hypothesis of proposition 3.1 of \cite{suess09}.
\end{proof}

Finally, suppose that the divisor $D$ is in fact very ample and gives a projective embedding. We are interested in the Hilbert polynomial $\HP_D$ of $D$. Recall that for natural numbers $k$ sufficiently large, $\HP_D(k)=\dim H^0(X,k\cdot D)$. On the other hand, recall that for any lattice polytope $\Delta$ of dimension $d$, there is a unique polynomial $E_\Delta$ of degree $d$ called the Ehrhart polynomial of $\Delta$, such that for any $k \in \NN$, $E_\Delta(k)$ is the number of lattice points in $k\cdot \Delta$.

\begin{defn}Let $\mathcal{P}$ be the set of all $P\in Y$ such that $\Psi_P$ isn't trivial. We then define the \emph{Ehrhart polynomial} $E_\Psi$ of the divisorial polytope $\Psi$ by
$$
E_\Psi(k)=E_\Box(k)+\sum_{P\in \mathcal P} \left(E_{\widetilde\Delta(\Psi,P)}(k)-E_\Box(k)\cdot(1-k \min_{u\in \Box} \Psi_P(u))\right).
$$
\end{defn}
\begin{remark}
	One easily checks that if $\Psi$ only has nontrivial coefficients for two points $P_1$ and $P_2$, then $E_\Psi=E_{\Delta(\Psi,P_1)}$.
\end{remark}

\begin{prop}
	We have $$E_\Psi\geq \HP_D\geq E_\Psi-g(Y)\cdot E_\Box.$$ Furthermore, if $\deg \lfloor \Psi(u) \rfloor\geq 2g(Y)-1$ for all $u\in \Box\cap M$ then $\HP_D=E_\Psi-g(Y)\cdot E_\Box$.
	In particular, if $Y=\mathbb{P}^1$, then $\HP_D=E_\Psi$.
\end{prop}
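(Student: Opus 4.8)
The plan is to reduce the computation of $\HP_D$ to Riemann--Roch on the curve $Y$, one $T$-weight at a time, and then to recognize $E_\Psi$ as exactly the sum of the Euler-characteristic contributions that ignore $h^1$. First I would pass to multiples of $D$: since $kD$ corresponds to the $k$-fold sum $k\Psi$, with weight polytope $k\Box$ and $(k\Psi^*)^*=k\Psi$, Proposition~\ref{prop:gs} gives a weight decomposition
\[
\HP_D(k)=\dim H^0(X,kD)=\sum_{u\in k\Box\cap M} h^0\big(Y,\floor{k\Psi(u/k)}\big),
\]
valid for $k\gg 0$, where $\HP_D(k)=\chi(X,kD)=\dim H^0(X,kD)$ by Serre vanishing since $D$ is ample. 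Everything is thereby reduced to the $\QQ$-divisors $k\Psi(u/k)$ on $Y$.

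The second step is the purely combinatorial identity
\[
E_\Psi(k)=\sum_{u\in k\Box\cap M}\big(\deg\floor{k\Psi(u/k)}+1\big).
\]
To prove it I would count lattice points of $k\widetilde\Delta(\Psi,P)$ fibre by fibre over each $u\in k\Box\cap M$. Because $\min_{u\in\Box}\Psi_P(u)$ is an integer (the minimum of a concave function with integral graph over a lattice polytope is attained at a vertex of the graph), the fibre over $u$ contains exactly $\floor{k\Psi_P(u/k)}-k\min_{u}\Psi_P(u)+1$ lattice points. Summing over $u$ cancels the correction terms $E_\Box(k)\cdot(1-k\min_u\Psi_P(u))$ appearing in the definition of $E_\Psi$, leaving $\sum_u\floor{k\Psi_P(u/k)}$ for each $P$; adding the summand $E_\Box(k)=|k\Box\cap M|$ and summing over $P\in\mathcal{P}$ yields the displayed formula.

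Combining the two steps with Riemann--Roch on $Y$, $h^0(E)=\deg E+1-g+h^1(E)$, gives
\[
\HP_D(k)=E_\Psi(k)-g\cdot E_\Box(k)+\sum_{u\in k\Box\cap M} h^1\big(Y,\floor{k\Psi(u/k)}\big).
\]
The lower bound $\HP_D\ge E_\Psi-g\cdot E_\Box$ is then immediate from $h^1\ge 0$ and is unconditional. The upper bound $\HP_D\le E_\Psi$ reduces to the termwise estimate $h^1(\floor{k\Psi(u/k)})\le g$, equivalently $h^0(E)\le\deg E+1$, which on a curve holds precisely when $\deg\floor{k\Psi(u/k)}\ge -1$. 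For $u/k$ in the interior of $\Box$ one has $\deg\Psi(u/k)>0$, so the degree is large and positive for $k\gg 0$; the only danger is along the faces of $\Box$ where $\deg\circ\Psi$ vanishes (the marked faces), and one must show that there the integral structure forced by $D$ being a genuine (very ample) Cartier divisor keeps $\deg\floor{k\Psi(u/k)}\ge -1$, so that $h^1\le g$. I expect this upper bound, and specifically the control of $h^1$ on the degree-zero faces, to be the main obstacle; the lower bound and the combinatorial identity are routine.

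Finally, for the equality statements: if $\deg\floor{\Psi(u)}\ge 2g-1$ for every $u\in\Box\cap M$, then $\deg\Psi(u)\ge 2g-1$ at the (lattice) vertices of $\Box$, and concavity of $\deg\circ\Psi$ propagates this to all of $\Box$; hence for $k\gg 0$ every divisor $\floor{k\Psi(u/k)}$ has degree at least $2g-1$, so $h^1=0$ by Serre duality and the displayed identity collapses to $\HP_D=E_\Psi-g\cdot E_\Box$ as polynomials. When $Y=\PP^1$ we have $g=0$, so the lower and upper bounds coincide and force $\HP_D=E_\Psi$.
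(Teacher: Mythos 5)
Your proposal follows the paper's own route step for step: the weight decomposition $\HP_D(k)=\sum_{u\in k\Box\cap M}h^0(Y,k\Psi(u/k))$ via Proposition~\ref{prop:gs}, the fibrewise lattice-point count giving
\[
E_\Psi(k)=\sum_{u\in k\Box\cap M}\bigl(1+\deg\floor{k\Psi(u/k)}\bigr),
\]
and then termwise Riemann--Roch. Your treatment of that combinatorial identity, of the unconditional lower bound, and of the equality case $\deg\floor{\Psi(u)}\ge 2g-1$ (where $h^1$ vanishes termwise for $g\ge 1$) is correct and matches the paper. What you do not do is prove the termwise upper bound $h^0(Y,E)\le\deg\floor{E}+1$; you correctly observe that it requires $\deg\floor{k\Psi(u/k)}\ge-1$, that this is only in doubt over the faces of $\Box$ on which $\deg\circ\Psi$ vanishes, and you leave it as ``one must show\ldots''. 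As written, that is a genuine gap in your argument.

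You should know, however, that the paper does not close this gap either: its proof simply asserts the two-sided estimate $\deg\floor{(k\Psi)(u)}+1-g\le h^0\le\deg\floor{(k\Psi)(u)}+1$ ``and the proposition follows'', with no argument in the case $\deg\floor{\cdot}\le-2$, where the upper inequality fails ($h^0=0>\deg\floor{\cdot}+1$). Moreover, the step cannot be repaired in general, because degree-zero faces really can produce such terms once three or more coefficients are simultaneously non-integral. Take $Y=\PP^1$, $\Box=[0,3]\times[0,1]$, and $\Psi=\Psi_{P_1}P_1+\Psi_{P_2}P_2+\Psi_{P_3}P_3$ with $\Psi_{P_1}(t,s)=\Psi_{P_2}(t,s)=t/3$ and $\Psi_{P_3}(t,s)=-2t/3+s$: all three graphs have integral vertices, $\deg\Psi(t,s)=s$, and at the two vertices with $s=0$ the divisor $\Psi(u)$ has degree $0$, hence is principal on $\PP^1$, so $(\Psi,\Box)$ is a divisorial polytope. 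At a lattice point $u=(t,0)$ of $k\Box$ with $t\equiv 2\pmod 3$ one computes $\deg\floor{k\Psi(u/k)}=2\floor{t/3}+\floor{-2t/3}=-2$, so each such point contributes $-1$ to $E_\Psi(k)$ but $h^0=0$ to $\HP_D(k)$; there are $k$ of them, whence $\HP_D(k)=E_\Psi(k)+k$, contradicting both $E_\Psi\ge\HP_D$ and the claimed equality for $Y=\PP^1$ (passing to a multiple $m\Psi$ makes $D$ very ample and changes nothing). So the obstacle you flagged is not merely the hard step, it is fatal to the statement as it stands: one needs an extra hypothesis, e.g.\ $\deg\floor{\Psi(u)}\ge-1$ for all $u\in\Box\cap M$, or at most two non-integral coefficients $\Psi_P(u)$ at each $u$ (which is why the paper's own examples are unaffected), and the very ample Cartier structure you hoped would force $\deg\floor{k\Psi(u/k)}\ge-1$ does not supply it.
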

\begin{proof}
	We have that for any $k\in\NN$, any $P\in \mathcal{P}$, and any $u\in k\cdot \Box\cap M$, $\lfloor (k\cdot\Psi)_P(u) \rfloor-k\cdot\min_{v\in \Box} \Psi_P(v)+1$ is equal to the number of lattice points in $k\cdot \widetilde \Delta (\Psi,P)$ projecting to $u$. Summing over all $u\in  k\cdot \Box\cap M$ and $P\in\mathcal{P}$, we get
	$$\sum_{u\in k\cdot \Box\cap M} \deg \lfloor (k\cdot\Psi)(u) \rfloor
	=\sum_{P\in \mathcal{P}} \left(E_{\widetilde\Delta(\Psi,P)}(k)-E_\Box(k)\cdot(1-k\cdot \min_{v\in\Box}\Psi_P(v)\right)
$$
and thus
$$\sum_{u\in k\cdot \Box\cap M} 1+\deg \lfloor (k\cdot\Psi)(u) \rfloor
	=E_\Psi(k).
$$

Now, for $k$ large enough,
$$
\HP_D(k)=\sum_{u\in k\cdot\Box\cap M} h^0(Y,(k\cdot\Psi)(u)).
$$
Applying the theorem of Riemann-Roch for curves, we have that 
$$
\deg \lfloor (k\cdot\Psi)(u) \rfloor +1-g(Y)\leq
h^0(Y,(k\cdot\Psi)(u))\leq \deg \lfloor (k\cdot\Psi)(u) \rfloor +1
$$
and the proposition follows.
\end{proof}

\begin{ex}
	We apply the above propositions three to the divisorial polytope $\Psi$ from figure \ref{fig:divpoly}. Regardless of the set of points $I\subset Y$, we always have $\vol \Delta(\Psi,I)=3$ and thus that  the corresponding divisor $D$ has self-intersection number $6$. We can also see that the corresponding projective surface is not smooth: $\Psi$ is not smooth at $(P,\pm 2)$ for any point $P\in Y$. Finally, we will see in section \ref{sec:gen} that $D$ is very ample, so we can calculate the Hilbert polynomial of $D$. Indeed, we have 
	\begin{align*}
		E_{\Box}(k)&=4k+1\\
		E_{\widetilde\Delta(\Psi,0)}&=11k^2+6k+1\\
		E_{\widetilde\Delta(\Psi,\infty)}&=
		E_{\widetilde\Delta(\Psi,1)}=4k^2+4k+1\\
	\end{align*}
and thus 
$$\HP_D(k)=E_\Psi(k)=3k^2+2k+1.$$
\end{ex}

\section{Affine Cones}\label{sec:cones}
Let $\Xi$ be a marked fansy divisor on a curve $Y$, and $h\in \CaSF(\Xi)$ such that $D_h$ is globally generated. Then the sections of $D_h$ determine a map $f:X(\Xi)\to  \mathbb{P}^n$; we denote the image of $f$ by $X$. Note that $X$ also comes with a natural complexity-one $T$-action, but in general $X$ need not be normal. By $C(X)$ we denote the affine cone over $X$ with respect to this embedding; let $\widetilde{C(X)}$ be the normalization of $C(X)$. The following proposition tells us how to describe $\widetilde{C(X)}$ in terms of a polyhedral divisor: 

\begin{prop}\label{prop:projcone}
With $h$ as above, set $$\D=\sum_P \conv (\Gamma_{-h_P})\cdot P$$
where $\Gamma_{-h_P}$ is the graph of $-h_P$. Then $\widetilde{C(X)}=X(\D)$. Furthermore, if $D_h$ is ample, then $\D$ is a proper polyhedral divisor on $Y$,
\end{prop}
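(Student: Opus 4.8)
The plan is to prove both assertions by comparing multigraded coordinate rings. Write $R=\bigoplus_{k\ge 0}H^0(X(\Xi),kD_h)$, graded both by $k$ and by the $M$-grading coming from the $T^N$-action on $X(\Xi)$, and let $S\subseteq R$ be the subalgebra generated by $S_1=H^0(X(\Xi),D_h)=R_1$. Since $X\subseteq\PP^n$ is the image of the morphism $f$ attached to the complete linear system $|D_h|$, its homogeneous coordinate ring is exactly $S$, so $C(X)=\spec S$. Thus the proposition reduces to the two statements that $R$ is the coordinate ring of $X(\D)$ and that, at the same time, $R$ is the integral closure of $S$.

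First I would identify $X(\D)=\spec R$. Passing to $N'=N\times\ZZ$ with dual $M'=M\times\ZZ$, the convexity of $-h_P$ shows that $\conv(\Gamma_{-h_P})$ is the epigraph $\{(v,t)\mid t\ge -h_P(v)\}$, with tailcone $\sigma=\{(v,t)\mid t\ge -h^0(v)\}$; the new $\ZZ$-direction carries the cone grading $k$. Evaluating, $\D(u,k)=\sum_P\min_{(v,t)\in\conv(\Gamma_{-h_P})}(\pair{v}{u}+kt)\cdot P$, and for $(u,k)\in\sigma^\vee$ the inner minimum is attained along $t=-h_P(v)$ at a vertex of $\Xi_P$, giving $\D(u,k)=(kh)^*(u)$, the dual of the support function $kh$; moreover $\sigma^\vee\cap M'$ is precisely the set of $(u,k)$ with $u\in k\Box_h$. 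Applying Proposition~\ref{prop:gs} to $kh$ then yields $H^0(X(\Xi),kD_h)_u=H^0(Y,\D(u,k))$ for every admissible $(u,k)$, and summing over all degrees identifies $R$ with $\bigoplus_{(u,k)\in\sigma^\vee\cap M'}H^0(Y,\D(u,k))$, i.e. $X(\D)=\spec R$.

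Next I would show $\spec R=\widetilde{C(X)}$ by proving that $S\hookrightarrow R$ realizes $R$ as the integral closure of $S$. Here $R$ is normal, being the section ring of the normal variety $X(\Xi)$ with respect to the semiample bundle $D_h$; it then suffices to check that $R$ is module-finite over $S$ with $\operatorname{Frac}(R)=\operatorname{Frac}(S)$, for in that case $R$ is a normal, finite, birational extension of $S$ and hence equals $\bar S$. Finiteness follows once one knows that $R_k=S_k$ for $k\gg 0$, and equality of fraction fields follows because $f$ is birational onto its image---it restricts to an isomorphism of the dense torus orbits, so $C(X)$ and $X(\D)$ have the same dimension and the same function field. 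I expect this normalization step to be the main obstacle: one must argue carefully that passing from $S$ to the full section ring $R$ neither enlarges the function field (which is where the birationality of $f$, rather than mere global generation, is really used) nor fails to be finite, and that $C(X)$ indeed has the expected dimension.

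Finally, for the properness of $\D$ when $D_h$ is ample, I would combine the ampleness criterion of Proposition~\ref{prop:ample} with the curve characterization of properness recalled after the definition of a proper polyhedral divisor. Strict concavity of $h$ makes $h^0$ strictly concave, so $\sigma=\{(v,t)\mid t\ge -h^0(v)\}$ is pointed and full-dimensional and each $\conv(\Gamma_{-h_P})$ is a genuine polyhedron with tailcone $\sigma$. Since on a curve a $\QQ$-divisor is semiample exactly when its degree is nonnegative and big exactly when its degree is positive, and since $\D(u,k)=(kh)^*(u)$, everything reduces to the degree behaviour of the divisorial polytope $(h^*,\Box_h)$: the conditions $\deg h^*>0$ on the interior of $\Box_h$ and $\deg h^*\ge 0$ with $h^*\sim 0$ at the relevant vertices---which hold by Proposition~\ref{prop:ample} together with the Cartier (marked) condition---translate directly into $\deg\D\subsetneq\sigma$ and into the principality of a multiple of $\D(w)$ whenever $\min_{v\in\deg\D}\pair{v}{w}=0$. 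Thus $\D$ is proper, exactly as in the proof of the divisorial-polytope correspondence theorem.
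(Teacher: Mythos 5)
Your proposal follows the same skeleton as the paper's proof: identify $C(X)=\spec S$ where $S\subset R=\bigoplus_{k\geq 0}H^0(X(\Xi),kD_h)$ is the subalgebra generated in degree one; identify $\spec R=X(\D)$ via the computation $\D((u,k))=(k\cdot h)^*(u)$, the description of the weight cone, and Proposition~\ref{prop:gs}; then conclude by showing $R$ is the integral closure of $S$; and finally translate ampleness into the curve criterion for properness (there you are in fact more thorough than the paper, which only records the interior-degree condition). The one point where you diverge is the crux: the paper settles ``$R$ is the integral closure of $S$'' by citing \cite{MR0463157}, Exercise II.5.14(a), whereas you try to prove it by hand, and both of your substitute arguments break down.

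First, the finiteness step. You propose to deduce module-finiteness of $R$ over $S$ from ``$R_k=S_k$ for $k\gg 0$''; this equality is false precisely in the situation the proposition is designed for. Writing $f\colon X(\Xi)\to X$ for the induced morphism, so that $\CO(D_h)=f^*\CO_X(1)$, Serre's theorem gives $S_k=H^0(X,\CO_X(k))$ for $k\gg 0$, while the projection formula gives $R_k=H^0\bigl(X,(f_*\CO_{X(\Xi)})(k)\bigr)$; hence $R_k/S_k\cong H^0(X,Q(k))$ for $k\gg 0$, where $Q=f_*\CO_{X(\Xi)}/\CO_X$, and this is nonzero for all large $k$ unless $Q=0$, i.e.\ unless $f$ is an isomorphism onto a normal image. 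Since the paper explicitly allows $X$ to be non-normal, your criterion begs the question; the correct route (which is the content of Hartshorne's exercise) uses coherence instead: $\bigoplus_k H^0\bigl(X,(f_*\CO_{X(\Xi)})(k)\bigr)$ is a finite module over $\bigoplus_k H^0(X,\CO_X(k))$, and the latter agrees with $S$ in all large degrees, so no degreewise comparison with $R$ is needed (nor is it true). Second, the birationality step. Your justification that $\operatorname{Frac}(R)=\operatorname{Frac}(S)$ --- ``$f$ restricts to an isomorphism of the dense torus orbits'' --- does not parse: a complexity-one $T$-variety has no dense torus orbit, and $T$-equivariance gives no generic injectivity. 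Birationality of $f$ onto its image requires the linear systems $h^*(u)$, $u\in\Box_h\cap M$, to jointly separate generic points of $Y$, and this is not formal from the stated hypotheses: for instance, on $Y$ elliptic take $\Box=[0,2]$ and $h^*(u)=\min(u,2-u)\cdot(P_1+P_2)$ with $P_1+P_2$ a fibre of a double cover $Y\to\PP^1$; this $h$ is Cartier, ample and globally generated, yet the associated map is $2{:}1$ onto a quadric cone. So birationality is a genuine assumption here --- one that is also silently built into the paper's appeal to the exercise, which presupposes that $f$ is finite and birational onto its image, i.e.\ the normalization map. You correctly sensed that this is the main obstacle, but the argument you offer for it does not hold up.
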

\begin{proof}
	The homogeneous coordinate ring of $X$ with respect to the given embedding is $A= \bigoplus_{k\geq0} S^k(H^0(X,D_h))$, where $S^k$ is the $k$-th symmetric product. Thus, $C(X)=\spec A$. Now, the integral closure of $A$ is $\widetilde{A}=\bigoplus_{k\geq 0} H^0(X,k\cdot D_h)$, see \cite{MR0463157}, exercise II.5.14(a), and thus $\widetilde{C(X)}=\spec \widetilde{A}$.

	On the other hand, we claim that $$\bigoplus_{k\geq 0} H^0(X,k\cdot D_h)=\bigoplus_{(u,k)\in \tail(\D)\cap(M\times \ZZ)} H^0(Y,\D( (u,k))).$$
	Indeed, $H^0(X,k\cdot D_h)=\bigoplus_{u\in\Box_{k\cdot h}} H^0(Y,(k\cdot h)^*(u))$ by proposition \ref{prop:gs}. Furthermore,
	$\Box_{k\cdot h}=\{u\in M_\QQ\ |\ (u,k)\in \tail \D\}$. The claim then follows from the fact that
for $P\in Y$, 
	\begin{align*}
	\D( (u,k))_P=\min \langle \conv(\Gamma_{-h_P}),(u,k)\rangle=\min_{v\in N_\QQ} \langle v,u\rangle- k\cdot h_P(v)=(k\cdot h)^*(u).
\end{align*}
We thus have 
$$
\widetilde{C(X)}=\spec \bigoplus_{(u,k)\in \tail(\D)\cap(M\times \ZZ)} H^0(Y,\D( (u,k))) = X(\D).
$$
Now, if $D_h$ is ample, $\deg h^*(u)>0$ for $u$ in the interior of $\Box_h$, and thus $\D$ is proper.
\end{proof}

\begin{remark}
	Let $X$ be the image in projective space of some complexity-one $T$-variety $\widetilde X$ via a map corresponding to an invariant, globally generated,  ample divisor $D$. Suppose now that the normalized affine cone over $X$ is given by $\widetilde{C(X)}=X(\D)$, where $\D$ is a polyhedral divisor on some smooth projective curve $Y$ with corresponding lattice $N'$. Choose some isomorphism $N'\cong N\oplus \ZZ$, where the second term in the direct sum corresponds to the natural $\mathbb{C}^*$-action on the cone $\widetilde{C(X)}$. Reversing the above proposition, we can easily recover a marked fansy divisor $\Xi$ and a support function $h=\sum h_P\cdot P$ such that $\widetilde X=X(\Xi)$ and $D=D_h$. Indeed, let $h:N_\QQ\to \QQ$ be defined by 
$$
-h(v)_P=\min \pi_2 (\pi_1^{-1}(v)\cap \D_P)
$$
where $\pi_i$ is the projection of $N_\QQ\oplus \QQ$ onto the $i$th factor. Let $\Xi$ be the polyhedral subdivision of $N$ induced by the piecewise affine function $h$. We add marks to a top-dimensional cone $\sigma$ in the tailfan of $\Xi$ if $h_{|\sigma}(0)$ is principle, and we add marks to lower-dimensional cones $\tau$ if for some full-dimensional marked $\sigma$, $\tau\prec\sigma$ and $\deg \D^\sigma\cap\tau\neq \emptyset$. Then one easily checks that $\Xi$ is a marked fansy divisor, $h\in\CaSF(\Xi)$, $X=X(\Xi)$, and the embedding $X\hookrightarrow \mathbb{P}^n$ is given by the linear system $D_h$. Note that this procedure for determining $\Xi$ from $\D$ coincides with a special case of the procedure in section 5 of \cite{MR2363496}, although we now also retain information on the linear system $D_h$ of the embedding. The description of the corresponding divisorial polytope is even more simple: One easily checks that $\Box_h$ is the projection of $(M_\QQ \times \{1\}) \cap \tail \D ^\vee$ onto $M_\QQ$ and that $h^*(u)=\D((u,1))$.
\end{remark}

\begin{ex}
	It is not difficult to check that the divisor $D_h$ coming from the support function $h$ on $\Xi'$ of figure \ref{fig:supportfunction} is in fact globally generated. In fact, it follows from the proof of \cite{lars08} 3.27 that any semiample divisor $D_h$ is globally generated if $h^*(u)$ is globally generated for all $u\in \Box_h$. For $Y=\mathbb{P}^1$, this is always the case; thus, $D_h$ in our example is globally generated and defines a morphism to projective space with some $T$-invariant image $X$. By the above proposition, we then know that $\widetilde{C(X)}=X(\D)$, where $\tail(\D)$ is generated by $(-1,2),(1,2)$ and $\D_0$ has vertices $(-1,2),(0,1),(1,2)$, and $\D_\infty$ and $\D_1$ have sole vertex $(-1/2,0)$.
\end{ex}

\section{Finding Generators}\label{sec:gen}
Recall that for an affine toric variety coming from some pointed cone $\sigma$, a unique set of minimal generators of the corresponding multigraded algebra can be determined by calculating a Hilbert basis of $\sigma^\vee$. The goal of this section is to present a similar result for complexity-one $T$-varieties. We can then use this result to determine when a projective embedding is projectively normal.

Let $\D$ be a proper polyhedral divisor with tailcone $\sigma$ on a smooth projective curve $Y$. For $u\in\sigma^\vee\cap M$ we define $\A_u:=H^0(Y,\lfloor \D(u) \rfloor)$ and $$\A=\bigoplus_{u\in \sigma^\vee\cap M} \A_u.$$ Thus, our goal is to find generators of the $\mathbb{C}$-algebra $\A$.

Let $g$ be the genus of $Y$ and let $c$ be the minimum of $0$ and one less than the number of $P\in Y$ such that $\D_P$ is not a lattice polyhedron. Note then fur any $u\in\sigma^\vee\cap M$,
$$\deg \lfloor \D(u) \rfloor \geq \deg \D(u)-c.$$
We now take $\Sigma$ be the coarsest common refinement of  the set of all normal fans of $\D_P$, where $P$ is a point on $Y$. Note that $\D$ is linear on each cone of $\Sigma$. Each cone $\tau$ of $\Sigma$ defines a subalgebra $$\A_\tau:=\bigoplus_{u\in \tau\cap M} \A_u.$$ Note that the union of all such subalgebras is again $\A$.
For any cone $\tau\in\Sigma$, let $\tau'$ be a pointed cone and $u_\tau\in M\cap \tau\cap -\tau$ a weight such that $\tau=\tau'+\langle u_\tau \rangle$. Let $\hb(\tau')$ be the Hilbert basis of $\tau'$; note that the semigroup $\tau\cap M$ is generated by $\hb(\tau')\cup\{u_\tau\}$.
Furthermore, for $u\in\hb(\tau')\cup \{u_\tau\}$ we define $\alpha_u\in\NN$ to be the smallest number such that:
\begin{enumerate}
	\item $\D(\alpha_u\cdot u)$ is principal and $\lfloor \D(\alpha_u\cdot u) \rfloor)=\D(\alpha_u\cdot u)$; or
	\item $\alpha_u/2\in \NN$, $\deg \D(\alpha_u\cdot u)\geq 4g+2+2c$, and $\lfloor \D( (\alpha_u/2\cdot u)) \rfloor)=\D( (\alpha_u/2)\cdot u)$.
\end{enumerate}
Note that the properness of $\D$ guarantees that such an $\alpha_u$ exists. Also, some multiple of $\D(u_\tau)$ must be principle, since $\deg \D(u_\tau)=0$.

Finally, we set 
$$\G_\tau:=\Big\{ \sum_{u\in\hb(\tau')} k_u\cdot u\ \Big|\ 0\leq k_u\leq \alpha_u\Big\}\cup \Big\{ \alpha_{u_\tau}\cdot u_\tau \Big\}.$$

\begin{thm}\label{thm:genwt}
	For $\tau\in \Sigma$, the algebra $\A_\tau$ is generated in degrees $\G_\tau$. In particular, $\A$ is generated in degrees $\G_\D:=\bigcup_{\tau\in\Sigma} \G_\tau$.
\end{thm}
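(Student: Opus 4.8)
The plan is to prove the statement for a single cone $\tau\in\Sigma$ and then observe that the ``in particular'' clause follows immediately, since $\A=\bigcup_{\tau\in\Sigma}\A_\tau$ and a generating set for each subalgebra, taken in union, generates the whole algebra. So the entire content lies in showing that $\A_\tau$ is generated in degrees $\G_\tau$.

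First I would reduce the graded structure of $\A_\tau$ to the semigroup $\tau\cap M$, which by hypothesis is generated by $\hb(\tau')\cup\{u_\tau\}$. The key point is that on $\tau$ the evaluation $u\mapsto\D(u)$ is \emph{linear} (this is exactly why $\Sigma$ was chosen to refine all the normal fans of the $\D_P$), so $\D(u+u')=\D(u)+\D(u')$ for $u,u'\in\tau\cap M$. Consequently $\A_u\cdot\A_{u'}\subset\A_{u+u'}$ is controlled by the multiplication map $H^0(Y,\lfloor\D(u)\rfloor)\otimes H^0(Y,\lfloor\D(u')\rfloor)\to H^0(Y,\lfloor\D(u+u')\rfloor)$ of sections of line bundles on the curve $Y$. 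The proof therefore rests on a surjectivity statement for such multiplication maps. The standard tool here is the classical fact that for line bundles $\LL_1,\LL_2$ on a smooth projective curve of genus $g$, the map $H^0(\LL_1)\otimes H^0(\LL_2)\to H^0(\LL_1\otimes\LL_2)$ is surjective once $\deg\LL_1\geq 2g$ and $\deg\LL_2\geq 2g+1$ (a Castelnuovo--Mumford / Mumford's theorem type statement); the constants $4g+2+2c$ and $2g$ appearing in the definitions of $\alpha_u$ and the floor estimate $\deg\lfloor\D(u)\rfloor\geq\deg\D(u)-c$ are precisely tuned so that once degrees are large enough, products of sections surject.

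The heart of the argument, then, is a two-part degree analysis along each Hilbert-basis direction. For a fixed generator $u\in\hb(\tau')$, one climbs the ray $\NN\cdot u$: for small multiples $k<\alpha_u$ the weight $k\cdot u$ is simply included in $\G_\tau$, and at the threshold $\alpha_u$ one shows that $\A_{\alpha_u\cdot u}$ is already generated by lower-degree pieces. Case (i) handles the situation where $\D(\alpha_u\cdot u)$ is principal with no floor loss, so that the corresponding graded piece behaves like a free rank-one module generated by a single section that can be absorbed; case (ii) handles the generic high-degree situation where $\deg\D(\alpha_u\cdot u)\geq 4g+2+2c$ guarantees, via the floor estimate and the multiplication surjectivity, that $\A_{\alpha_u\cdot u}=\A_{(\alpha_u/2)\cdot u}\cdot\A_{(\alpha_u/2)\cdot u}$. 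The direction $u_\tau$ is special since $\deg\D(u_\tau)=0$ forces a multiple of $\D(u_\tau)$ to be principal (by properness, as noted in the text), so $\alpha_{u_\tau}\cdot u_\tau$ together with its inverse generates the $\langle u_\tau\rangle$-part. One then combines an arbitrary $u\in\tau\cap M$, written as $\sum_{u_i\in\hb(\tau')} k_i u_i + k_0 u_\tau$, and by a Euclidean-division bookkeeping on each $k_i$ modulo $\alpha_{u_i}$ reduces every weight to a product of the listed generators, using the multiplication surjectivity at each merge step.

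I expect the main obstacle to be the bookkeeping that establishes surjectivity of multiplication \emph{with floors}, i.e. controlling $\lfloor\D(u)\rfloor+\lfloor\D(u')\rfloor$ versus $\lfloor\D(u+u')\rfloor$. Even though $\D$ is linear on $\tau$, the floor operation is only superadditive, so $\lfloor\D(u)\rfloor+\lfloor\D(u')\rfloor\leq\lfloor\D(u+u')\rfloor$ with possible strict inequality; the loss is bounded by the number of non-lattice coefficients, which is where the constant $c$ enters and why the degree thresholds carry the $+2c$ correction. Making this precise---so that the genus-$g$ multiplication theorem can be applied to the floored divisors rather than to $\D(u)$ itself---is the delicate step. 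Once the surjectivity is available at the claimed degree bounds, the combinatorial reduction to $\G_\tau$ is routine, and the passage from $\A_\tau$ to $\A$ is formal.
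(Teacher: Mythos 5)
Your overall strategy matches the paper's: reduce to a single cone $\tau$ where $\D$ is linear, use the two-case surjectivity lemma for multiplication of sections on a curve (the trivial principal case and Mumford's $2g+1$/$2g$ theorem), treat the lineality direction $u_\tau$ via principality, and induct along Hilbert-basis directions with the thresholds $\alpha_u$. But your explicit reduction scheme has a gap at exactly the step you flag as delicate, and flagging it is not the same as closing it. In case (ii) of the definition of $\alpha_{u'}$ (where $\D(\alpha_{u'}u')$ is not principal), your plan is Euclidean division: split a weight $u=\alpha_{u'}u'+u''$ into the block $\alpha_{u'}u'$ and the remainder $u''$, and merge $\A_{\alpha_{u'}u'}$ with $\A_{u''}$ by multiplication surjectivity. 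This merge fails in general: Mumford's theorem needs \emph{both} factors to have large degree, and $\deg\lfloor\D(u'')\rfloor$ can be arbitrarily small (even negative after taking floors), for instance when $u''$ lies near a face of $\sigma^\vee$ on which $\deg\D$ vanishes. Your identity $\A_{\alpha_u\cdot u}=\A_{(\alpha_u/2)\cdot u}\cdot\A_{(\alpha_u/2)\cdot u}$ only handles pure multiples along one ray and does not help merge the block with the remainder.

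The paper closes this with an asymmetric splitting that your outline lacks: write $u=\big((\alpha_{u'}/2)u'\big)+\big((\alpha_{u'}/2)u'+u''\big)$, i.e.\ put half of the block into \emph{each} factor. Then both factors have floor-degree at least $2g+1$: the first because $\D((\alpha_{u'}/2)u')$ is an integral divisor of degree $\geq 2g+1+c$ (this is precisely what condition (ii) in the definition of $\alpha_{u'}$ buys), and the second because it contains that integral divisor plus $\D(u'')$, whose degree is $\geq 0$ by properness, so taking floors loses at most $c$. Moreover, the integrality of $\D((\alpha_{u'}/2)u')$ makes the floors add exactly, $\lfloor\D(u)\rfloor=\D((\alpha_{u'}/2)u')+\lfloor\D((\alpha_{u'}/2)u'+u'')\rfloor$, which is needed since, as you note, floors are only superadditive in general. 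Mumford's theorem then applies to this pairing, and the induction descends on the strictly smaller weight $(\alpha_{u'}/2)u'+u''$. With this splitting substituted for your Euclidean division, the rest of your outline (the principal case, the $u_\tau$ direction handled by principality, and the formal passage from the $\A_\tau$ to $\A$) goes through as in the paper.
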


We will need the following lemma:
\begin{lemma}\label{lemma:surjsec}
Let $D_1$, $D_2$ be divisors on a smooth curve $Y_0$. Then the natural map 
$$
H^0(Y,D_1)\times H^0(Y,D_2)\to H^0(Y,D_1+D_2)
$$
is surjective if
\begin{enumerate}
	\item $D_1$  is principal; or
	\item $Y_0$ is complete and $\deg D_1\geq 2g+1$,$\deg D_2\geq 2g$.
\end{enumerate}
\end{lemma}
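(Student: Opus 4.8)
The statement splits exactly along the two cases of the hypothesis, and I would prove each separately, since they require completely different tools.

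Case (i) is elementary and needs no completeness. Write $D_1 = \Div(f)$ for a rational function $f$ on $Y_0$. Then $f^{-1}$ is a global section of $\CO_{Y_0}(D_1)$ which vanishes nowhere, since $\Div(f^{-1}) + D_1 = 0$; equivalently, multiplication by $f^{-1}$ trivializes $\CO_{Y_0}(D_1)$. Consequently multiplication by $f^{-1}$ defines an isomorphism $H^0(D_2) \to H^0(D_1 + D_2)$, with inverse multiplication by $f$. Since this isomorphism is literally $t \mapsto \mu(f^{-1}\otimes t)$, where $\mu$ is the multiplication map, $\mu$ is already surjective on the subspace $f^{-1}\otimes H^0(D_2)$, and we are done.

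For case (ii) I would reformulate in terms of line bundles $A = \CO(D_1)$ and $B = \CO(D_2)$ on the complete curve $Y_0$ of genus $g$; this is the classical statement that the multiplication map of sections is surjective in this degree range. Here $\deg A \geq 2g+1$ makes $A$ very ample, $\deg B \geq 2g$ makes $B$ globally generated, and $\deg(A+B) \geq 2g$ makes $A+B$ base-point-free. The engine I would use is an \emph{add-a-point} reduction. Fix a point $p$. Since $A+B$ is base-point-free, $H^0(A+B-p) \subset H^0(A+B)$ is a hyperplane. Observing that $H^0(B-p) \subseteq H^0(B)$ and that products $H^0(A)\cdot H^0(B-p)$ land in $H^0(A+B-p)$, surjectivity of $\mu_{A,B-p}$ onto $H^0(A+B-p)$ forces $\operatorname{im}\mu_{A,B} \supseteq H^0(A+B-p)$. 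One extra dimension is then supplied by choosing $s \in H^0(A)$, $t\in H^0(B)$ with $s(p), t(p)\neq 0$ (possible as both bundles are base-point-free at $p$): the product $st$ lies outside the hyperplane, so $\operatorname{im}\mu_{A,B} = H^0(A+B)$. Iterating, I reduce to a minimal base case $\deg A = 2g+1$, $\deg B = 2g$.

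The base case is where the real difficulty sits, and it is the main obstacle. The base-point-free pencil trick applied to a general pencil $V \subseteq H^0(B)$ (or $H^0(A)$) reduces surjectivity to the vanishing $H^1(A - B) = 0$, which by Serre duality holds once the degree gap satisfies $\deg A - \deg B \geq 2g-1$. This disposes of the large-gap regime and, in particular, of all of genus $g \leq 1$, where at the base case the gap $1$ already meets the threshold. The genuinely hard regime is the near-symmetric one, $\deg A \approx \deg B$ with $g \geq 2$ (for instance $D_1 \sim D_2$, which is exactly what the application in Theorem~\ref{thm:genwt} needs via the halving in condition~(2) of $\alpha_u$); there $H^1(A-B)$ does not vanish. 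To handle it I would pass to the kernel-bundle sequence $0 \to M_B \to H^0(B)\otimes\CO_{Y_0} \to B \to 0$, twist by $A$, and take cohomology, so that surjectivity becomes the vanishing $H^1(Y_0, M_B\otimes A) = 0$; this follows from the (semi)stability of the kernel bundle $M_B$ together with $\deg A \geq 2g+1$, and is precisely the content of the classical normal-generation theorem of Castelnuovo and Mumford, which at this point I would invoke rather than reprove.
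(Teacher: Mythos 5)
Your argument is correct, but you should know that the paper's own proof is two sentences long: case (i) is declared immediate (your trivialization by $f^{-1}$ is exactly that argument spelled out), and case (ii) is settled by citing Mumford \cite{MR0282975}, whose theorem (attributed to Castelnuovo) asserts precisely the surjectivity of $H^0(A)\otimes H^0(B)\to H^0(A+B)$ whenever $\deg A\geq 2g+1$ and $\deg B\geq 2g$ on a smooth complete curve. Since the classical result you invoke at the very end is this same two-bundle statement, valid in the full degree range of case (ii), your add-a-point induction, the pencil-trick treatment of the large-gap and low-genus regimes, and the kernel-bundle reformulation are all logically superfluous: one could delete them and quote the theorem directly, which is what the paper does. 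What your longer route buys is partial self-containedness: it isolates exactly where nontrivial input is needed (the base case $\deg A=2g+1$, $\deg B=2g$ with $g\geq 2$) and proves everything else from first principles, and the identification $\operatorname{coker}\mu\cong H^1(M_B\otimes A)$ points to a genuinely independent way to finish, namely semistability of the kernel bundle $M_B$ for $\deg B\geq 2g$ (Paranjape--Ramanan, Butler) combined with the slope bound $\mu(M_B\otimes A)\geq 2g-1>2g-2$. Be careful, though, with your last sentence: that semistability statement is a separate and much later result, not ``precisely the content of the classical normal-generation theorem,'' and normal generation in the strict sense (projective normality of a single bundle of degree $\geq 2g+1$) is also not literally the two-bundle statement you need. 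Either the semistability route or the Castelnuovo--Mumford lemma legitimately closes the proof, but you should pick one and cite it accurately rather than blending the two.
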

\begin{proof}
	The first case is immediate. The second case is due to Mumford \cite{MR0282975}.\end{proof}

\begin{proof}[Proof of theorem \ref{thm:genwt}]
	Fix some $\tau\in\Sigma$ and consider $u\in \tau'\cap M$ such that $u\notin\G_\tau$. Then there is some $u'\in \hb(\tau')$ and $u''\in \tau'\cap M$ such that $u=\alpha_{u'} u'+u''$.
Suppose first that $\D(\alpha_{u'} u')$ is principal. Then 
$$\lfloor \D(u)\rfloor= \lfloor \D(\alpha_{u'}u') \rfloor + \lfloor \D(u'') \rfloor$$ and it follows from the first case of the above lemma that $\A_{u}$ is generated by $\A_{\alpha_{u'}u'}$ and $\A_{u''}$.
Suppose now instead that $\deg \alpha_{u'} \D(u')\geq 4g+2+c$. Then 
$$\lfloor \D(u)\rfloor= \lfloor \D( (\alpha_{u'}/2)u') \rfloor + \lfloor \D((\alpha_{u'}/2)u'+u'') \rfloor$$ and we have 
\begin{align*}
	\deg \lfloor \D( (\alpha_{u'}/2)u') \rfloor&\geq 2g+1+c\\
	\deg \lfloor \D((\alpha_{u'}/2)u'+u'') \rfloor&\geq 2g+1+c-c\geq 2g+1.
\end{align*} Thus, it follows from the second case of the above lemma that $\A_{u}$ is generated by $\A_{(\alpha_{u'}/2)u'}$ and $\A_{(\alpha_{u'}/2)u'+u''}$. 
Continuing this argument by induction, we can conclude that $\A_u$ is generated in degrees lying in $\G_\tau$ for any $u\in \tau'\cap M$. 

Now consider any $u\in \tau\cap M$ such that $u\notin\G_\tau$. We can write $u=k\alpha_{u_\tau}u_\tau+u'$ for some $u'\in \tau'\cap M$ and $k\in \NN$. Then once again by the first part of the above lemma, $\A_{u}$ is generated by $\A_{k\alpha_{u_\tau}u_\tau'}$ and $\A_{u'}$, but by the above we already knew that $\A_{u'}$ is generated by degrees in $\G_\tau$. Thus, we can conclude that $\A_\tau$ is generated in degrees $\G_\tau$. The statement concerning $\A$ follows immediately.
\end{proof}

We can now use theorem \ref{thm:genwt} to give a finite list of generators of $\A$. Note that we can consider $\A_0$ as a finitely generated $\mathbb{C}$-algebra, say with generators $f_0^1\ldots,f_0^{d_0}$. Now for $u\in\G_\D$, $u\neq 0$, let $f_u^1,\ldots,f_u^{d_u}$ generate $\A_u$ as an $\A_0$-module. The following corollary is then immediate:

\begin{cor}
	The algebra $\A$ is generated as a $\C$-algebra by $$\Big\{f_u^i\Big\}_{\substack {u\in\G_\D \\ 1\leq i \leq d_u}}.$$
\end{cor}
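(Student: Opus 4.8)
The plan is to read off the corollary directly from Theorem~\ref{thm:genwt} together with the two generation facts recorded immediately before the statement: that $\A_0$ is finitely generated as a $\C$-algebra by $f_0^1,\ldots,f_0^{d_0}$, and that for each $u\in\G_\D$ with $u\neq 0$ the graded piece $\A_u$ is generated as an $\A_0$-module by $f_u^1,\ldots,f_u^{d_u}$. All the real work is already done by Theorem~\ref{thm:genwt}; what remains is to translate the statement ``$\A$ is generated in degrees $\G_\D$'' into a statement about these explicit algebra generators.

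First I would note that $0\in\G_\D$: for any cone $\tau\in\Sigma$, taking $k_u=0$ for all $u\in\hb(\tau')$ exhibits the empty sum $0$ as an element of $\G_\tau$, hence of $\G_\D=\bigcup_{\tau}\G_\tau$. Thus the degree-zero piece $\A_0$ is itself one of the graded components $\A_u$, $u\in\G_\D$, that generate $\A$ as a $\C$-algebra according to Theorem~\ref{thm:genwt}. Let $\mathcal{B}\subset\A$ be the $\C$-subalgebra generated by the full finite list $\{f_u^i\}_{u\in\G_\D,\,1\le i\le d_u}$; the goal is to show $\mathcal{B}=\A$.

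Next I would check that $\mathcal{B}$ contains $\A_u$ for every $u\in\G_\D$. For $u=0$ this holds because $f_0^1,\ldots,f_0^{d_0}$ generate $\A_0$ as a $\C$-algebra, so $\A_0\subset\mathcal{B}$. For $u\neq 0$, any element of $\A_u$ has the form $\sum_i g_i f_u^i$ with $g_i\in\A_0$, and each coefficient $g_i$ is a $\C$-polynomial in $f_0^1,\ldots,f_0^{d_0}$, so it lies in $\A_0\subset\mathcal{B}$; hence $\A_u\subset\mathcal{B}$. Consequently $\mathcal{B}$ contains every graded component $\A_u$ with $u\in\G_\D$, and therefore contains the $\C$-subalgebra these components generate, which is all of $\A$ by Theorem~\ref{thm:genwt}. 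Since $\mathcal{B}\subset\A$ trivially, we conclude $\mathcal{B}=\A$.

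There is no genuine obstacle in this last argument, which is essentially bookkeeping; the substance lies entirely in Theorem~\ref{thm:genwt} and in the finiteness assertions supplied beforehand (finiteness of $\Sigma$ and of each $\G_\tau$, finite generation of $\A_0$ as a $\C$-algebra, and finite generation of each $\A_u$ as an $\A_0$-module). The only two points I would be careful to state explicitly are the inclusion $0\in\G_\D$, which guarantees the $\C$-algebra generators of $\A_0$ are present in the list, and the reduction of an arbitrary element of $\A_u$ to a polynomial expression in the listed generators via its $\A_0$-module presentation.
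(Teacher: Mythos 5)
Your proof is correct and takes essentially the same route as the paper, which offers no written proof at all but declares the corollary ``immediate'' from Theorem~\ref{thm:genwt} together with the stated generation facts for $\A_0$ and the $\A_u$. Your write-up simply makes that bookkeeping explicit (including the worthwhile observation that $0\in\G_\D$, so the $\C$-algebra generators of $\A_0$ really do appear in the list), which is exactly the argument the authors have in mind.
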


The weight set $\G_\D$ and set of generators of $\A$ from the above corollary are in general not minimal. However, it is immediately clear that if the tail cone of $\D$ is full-dimensional, 
$$
\G_\D^{\min}:=\G_\D\setminus\left\{ u \in\G_\D \ \Big |\ \sum_{\substack{ u'\in\G^\D\\ u-u'\in\sigma^\vee}} H^0(Y,\lfloor\D(u')\rfloor)\times H^0(Y,\lfloor\D(u-u')\rfloor)=H^0(Y,\lfloor\D(u)\rfloor)\right\}
$$
is the unique minimal set of weights needed to generate $\A$. This set can be constructed by checking a \emph{finite} number of conditions.

\begin{cor}
	Let $\Xi$ be a marked fansy divisor on a curve $Y$, and $h\in\CaSF(\Xi)$ such that $D_h$ is very ample. Then the corresponding embedding is projectively normal if and only if all elements of $\G_\D^{\min}$ have last coordinate equal to one, where  $\D$ is defined as in proposition \ref{prop:projcone}. 
\end{cor}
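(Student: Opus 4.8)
The plan is to translate projective normality into a purely algebraic statement about the two graded rings attached to the cone, and then feed this into theorem \ref{thm:genwt}. Recall that the embedding $X\hookrightarrow\PP^n$ is projectively normal exactly when its affine cone $C(X)$ is normal, i.e.\ when the homogeneous coordinate ring $A=\bigoplus_{k\geq 0}S^k(H^0(X,D_h))$ coincides with its integral closure $\widetilde A=\bigoplus_{k\geq 0}H^0(X,k\cdot D_h)$. By proposition \ref{prop:projcone}, $\widetilde A=X(\D)$ for the graph divisor $\D=\sum_P\conv(\Gamma_{-h_P})\cdot P$, and since $D_h$ is very ample (hence ample and globally generated) $\D$ is proper. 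I will grade $M\times\ZZ$ by the last coordinate $k$; this is precisely the grading of $C(X)$ by degree, and $A$ is generated in degree one, being the subalgebra of $\widetilde A$ generated by $A_1=H^0(X,D_h)$.

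Next I would identify $\widetilde A$ with the algebra $\A$ of section \ref{sec:gen}. Because each $h_P$ is integral, the graph $\Gamma_{-h_P}$ has lattice vertices, so every coefficient $\D_P$ is a lattice polyhedron and $\floor{\D((u,k))}=\D((u,k))$ for all $(u,k)$. Hence $\widetilde A=\A=\bigoplus_{(u,k)\in\sigma^\vee}H^0(Y,\floor{\D((u,k))})$ with $\sigma=\tail\D$. The tail cone $\sigma$ is the recession cone of the epigraph of the convex linear part $-h^0$, so it is full-dimensional; by theorem \ref{thm:genwt} and the ensuing discussion, $\G_\D^{\min}$ is therefore the unique minimal set of weights generating $\A$. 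I would also record two small points that keep the bookkeeping clean: the only weight of $\sigma^\vee$ with $k=0$ is the origin, and $\A_0=H^0(Y,\CO_Y)=\C$, so $(0,0)\notin\G_\D^{\min}$ and every element of $\G_\D^{\min}$ already has last coordinate at least one.

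The heart of the argument is then to compare the coarse $\ZZ$-grading with the fine $(M\times\ZZ)$-grading. Here I would note that the degree-one slice of $\A$ is exactly $\bigoplus_{(u,1)\in\sigma^\vee}H^0(Y,\D((u,1)))=H^0(X,D_h)=A_1$, so that $A=\C[A_1]$ is the subalgebra of $\A$ generated by this slice. Since always $\C[A_1]\subset A\subset\widetilde A=\A$, projective normality $A=\A$ is equivalent to $\A$ being generated by its degree-one slice. If every element of $\G_\D^{\min}$ has last coordinate one, the corresponding generators of $\A$ all lie in $A_1$ (the only other contribution being the constants $\A_0=\C$), so $\A=\C[A_1]=A$ and the embedding is projectively normal. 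Conversely, if $\A$ is generated in degree one then no minimal generator can have last coordinate $\geq 2$, and by uniqueness of $\G_\D^{\min}$ each of its weights has last coordinate one.

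The genuine mathematical content is already carried by proposition \ref{prop:projcone} and theorem \ref{thm:genwt}, so I do not expect a hard estimate here; rather, the main obstacle is the careful matching of the two gradings, together with verifying that generation of $\A$ in the coarse degree one corresponds \emph{exactly} to all minimal weights lying in the slice $k=1$. In particular I expect the subtle points to be the uniqueness and minimality of $\G_\D^{\min}$ (which relies on the tail cone being full-dimensional) and the correct treatment of the degree-zero part, so that ``generated in degree one'' can legitimately be read off the weights rather than from the sections themselves.
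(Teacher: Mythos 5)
Your argument is correct and follows exactly the paper's route: the paper's entire proof is the single observation that the embedding is projectively normal if and only if $\A$ is generated in degree one with respect to the $\ZZ$-grading by the last coordinate, and your proposal is precisely this observation with the supporting details (the identification of $\widetilde{A}$ with $\A$ via proposition \ref{prop:projcone}, the matching of the coarse $\ZZ$-grading with the fine $(M\times\ZZ)$-grading, the treatment of $\A_0=\C$, and the uniqueness of $\G_\D^{\min}$) written out.

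One intermediate claim is false, though harmlessly so. Integrality of $h_P$ does \emph{not} imply that $\Gamma_{-h_P}$ has lattice vertices, nor that $\D_P$ is a lattice polyhedron: the vertices of the subdivision $\Xi_P$ may themselves be non-integral, and integrality of $h_P$ only forces the vertex $(v,-h_P(v))$ to have the same denominator as $v$. Indeed, in the paper's own running example the coefficient $\D_\infty$ has sole vertex $(-1/2,0)$. Consequently the equality $\floor{\D((u,k))}=\D((u,k))$ you assert can fail. The identification $\widetilde{A}=\A$ that you need holds anyway, for a simpler reason: global sections of a $\QQ$-divisor are by definition the sections of its round-down, so $H^0\bigl(Y,\D((u,k))\bigr)=H^0\bigl(Y,\floor{\D((u,k))}\bigr)$ always, with no lattice-polyhedron hypothesis required. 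Replacing your justification by this convention repairs the step, and the rest of the proof goes through unchanged.
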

\begin{proof}
The embedding is projectively normal if and only if $\A$ is generated in degree one with respect to the relevant $\ZZ$-grading.
\end{proof}

\begin{ex}
	Consider the polyhedral divisor $\D$ from the example of the previous section. One easily checks that $\G_\D=\{(-2,1),(-1,1),(0,1),(1,1),(2,1)\}$. Thus, the image $X$ of $X(\Xi)$ under the linear system $|D_h|$ is projectively normal. It then follows that $D_h$ is in fact very ample and the corresponding map is actually an embedding. Indeed, on the one hand, the quotient of $C(X)$ by $\mathbb{C}^*$ is clearly $X$. On the other hand, one can also check by calculation that the quotient of $C(X)$ by $\mathbb{C}^*$ is $X(\Xi)$.
\end{ex}

\begin{remark}
	If instead of being a curve, we have $Y=\mathbb{P}^n$ or $Y=\mathbb{A}^n$, we can define a similar set $\G_\D$ similar to above containing the weights generating $\A$. Indeed, in both cases there are statements similar to lemma \ref{lemma:surjsec}.
\end{remark}

\bibliography{proj-tvar}
\address{
Nathan Owen Ilten\\
Mathematisches Institut\\
Freie Universit\"at Berlin\\
Arnimallee 3\\
14195 Berlin, Germany}{nilten@cs.uchicago.edu}

\address{Hendrik S\"u\ss{}\\
	Institut f\"ur Mathematik\\
        LS Algebra und Geometrie\\
        Brandenburgische Technische Universit\"at Cottbus\\
        PF 10 13 44\\
        03013 Cottbus, Germany}{suess@math.tu-cottbus.de}

\end{document}